\def\doublespace{}	
\newdimen\@InsertBoxMargin
\def\ParShape{%
    \@numlines = 0
    \def\@parshapedata{ }
    \afterassignment\@beginParShape
    \@linesleft
}%
\def\@beginParShape{%
    \ifnum \@linesleft = 0
      \let\@whatnext = \@endParShape
    \else
      \let\@whatnext = \@readnextline
    \fi
    \@whatnext
}%
\def\@endParShape{%
    \global\parshape = \@numlines \@parshapedata
}%
\def\@readnextline#1 #2 #3 {
    \ifnum #1 > 0
      \bgroup  
        \dimen0 = \hsize
        \advance \dimen0 by -#2  
        \advance \dimen0 by -#3  
        \count0 = 0
        \loop
          \global\edef\@parshapedata{%
            \@parshapedata    
            #2                
            \space            
            \the\dimen0       
            \space            
          }%
          \advance \count0 by 1
          \ifnum \count0 < #1
        \repeat
      \egroup
      \advance \@numlines by #1
    \fi
    \advance \@linesleft by -1
    \@beginParShape
}%
\newbox\@boxcontent     
\newdimen\@framewidth   
\newdimen\@wherebottom  
\newif\if@byframe       
\def\InsertBoxC#1{%
  \leavevmode
  \vadjust{
    \vskip \@InsertBoxMargin
    \hbox to \hsize{\hss#1\hss}
    \vskip \@InsertBoxMargin
  }%
}%
\def\InsertBoxL#1#2{%
  \@numnormal = #1
  \setbox\@boxcontent = \hbox{#2}%
  \let\@side = 0
  \futurelet \@optionalparameter \@InsertBox
}
\def\InsertBoxR#1#2{%
  \@numnormal = #1
  \setbox\@boxcontent = \hbox{#2}%
  \let\@side = 1
  \futurelet \@optionalparameter \@InsertBox
}%
\def\@InsertBox{%
  \ifx \@optionalparameter [
    \let\@whatnext = \@@InsertBoxCorrection
  \else
    \let\@whatnext = \@@InsertBoxNoCorrection
  \fi
  \@whatnext
}%
\def\@@InsertBoxCorrection[#1]{%
  \ifx \@side 0
    \@@InsertBox{#1}{0}{{\the\@framewidth} 0cm}%
  \else
    \@@InsertBox{#1}{1}{0cm {\the\@framewidth}}%
  \fi
}%
\def\@@InsertBoxNoCorrection{%
  \@@InsertBoxCorrection[0]%
}%
\def\@@InsertBox#1#2#3{%
  \MoveBelowBox
  \@byframetrue
  \@wherebottom = \baselineskip
  \multiply \@wherebottom by \@numnormal
  \advance \@wherebottom by 2\@InsertBoxMargin
  \advance \@wherebottom by \ht\@boxcontent
  \advance \@wherebottom by \pagetotal
  \ifdim \pagetotal = 0cm
    \advance \@wherebottom by -\baselineskip  
  \fi
  \advance \@wherebottom by #1\baselineskip
  \@framewidth = \wd\@boxcontent
  \advance \@framewidth by \@InsertBoxMargin
  \bgroup  
    \ifdim \pagetotal = 0cm
      \dimen0 = \vsize
    \else
      \dimen0 = \pagegoal
    \fi
    \ifdim \@wherebottom > \dimen0
      \immediate\write16{+--------------------------------------------------------------+}%
      \immediate\write16{| The box will not fit in the page. Please, re-edit your text. |}%
      \immediate\write16{+--------------------------------------------------------------+}%
      \vrule width \overfullrule
    \fi
  \egroup
  \prevgraf = 0
  \vbox to 0cm{%
    \dimen0 = \baselineskip
    \multiply \dimen0 by \@numnormal
    \advance \dimen0 by -\baselineskip
    \setbox0 = \hbox{y}%
    \vskip \dp0
    \vskip \dimen0
    \vskip \@InsertBoxMargin
    \ifnum #2 = 1
      \vtop{\noindent \hbox to \hsize{\hss \box\@boxcontent}}%
    \else
      \vtop{\noindent \box\@boxcontent}%
    \fi
    \vss
  }%
  \vglue -\parskip
  \vskip -\baselineskip
  \everypar = {%
    \ifdim \pagetotal < \@wherebottom
      \bgroup  
        \dimen0 = \@wherebottom
        \advance \dimen0 by -\pagetotal
        \divide \dimen0 by \baselineskip
        \count1 = \dimen0
        \advance \count1 by 1
        \advance \count1 by -\@numnormal
        \ifnum #2 = 1
          \ParShape = 3
                      {\the\@numnormal}   0cm   0cm
                      {\the\count1}       0cm   {\the\@framewidth}
                      1                   0cm   0cm
        \else
          \ParShape = 3
                      {\the\@numnormal}   0cm                  0cm
                      {\the\count1}       {\the\@framewidth}   0cm
                      1                   0cm                  0cm
        \fi
      \egroup
    \else
      \@restore@    
    \fi
  }%
  \def\par{%
      \endgraf
      \global\advance \@numnormal by -\prevgraf
      \ifnum \@numnormal < 0
        \global\@numnormal = 0
      \fi
      \prevgraf = 0
  }%
}%
\def\MoveBelowBox{%
  \par
  \if@byframe
    \global\advance \@wherebottom by -\pagetotal
    \ifdim \@wherebottom > 0cm
      \vskip \@wherebottom
    \fi
    \@restore@
  \fi
}%
\def\@restore@{%
    \global\@wherebottom = 0cm
    \global\@byframefalse
    \global\everypar = {}%
    \global\let \par = \endgraf
    \global\parshape = 1 0cm \hsize
}%
  \let \pageno = \c@page
\long\def\insertboxz[#1,#2,#3]{%
  \@numnormal = #1
  \setbox\@boxcontent = \hbox{#3}%
  \if #2r
    \let\@side = 1
    \@@InsertBoxCorrection[0]
  \else \if #2l
          \let\@side = 0
          \@@InsertBoxCorrection[0]
        \else \leavevmode
              \vadjust{
                \vskip \@InsertBoxMargin
                \hbox to \hsize{\hss#1\hss}
                \vskip \@InsertBoxMargin
              }%
        \fi
  \fi
}
\def\endinsertboxz{%
  \par
  \@restore@
}
\long\def\insertbox[#1,#2,#3,#4] {%
  \@numnormal = #1
  \setbox\@boxcontent = \hbox{#3}%
  \if #2r
    \let\@side = 1
    \@@InsertBoxCorrection[#4]
  \else \if #2l
          \let\@side = 0
          \@@InsertBoxCorrection[#4]
        \else \leavevmode
              \vadjust{
                \vskip \@InsertBoxMargin
                \hbox to \hsize{\hss#1\hss}
                \vskip \@InsertBoxMargin
              }%
        \fi
  \fi
}
\def\endinsertbox{%
  \par
  \@restore@
}
\theoremstyle{plain}
  \newtheorem{thm}{Theorem}[section]
  \newtheorem{prop}[thm]{Proposition}
  \newtheorem{cor}[thm]{Corollary}
\theoremstyle{definition}
  \newtheorem{defn}[thm]{Defninition}
\theoremstyle{remark}
  \newtheorem{rem}[thm]{Remark}
\def\TC#1{\def\args{#1}\ifx\args\empty\operatorname{\,\mathrm{TC}\,}\else\operatorname{\,\mathrm{TC}}(#1)\fi}
\def\TCM#1{\def\args{#1}\ifx\args\empty\operatorname{\,\mathrm{TC^{M}\,}}\else\operatorname{\,\mathrm{TC}^{M}}(#1)\fi}
\def\TCcuplen#1#2{\operatorname{\,\mathrm{Z}_{#2}}\ifx\empty#1\else(#1)\fi}
\def\TCwgt#1#2{\operatorname{wgt_{#2}}\ifx\empty#1\else(#1)\fi}
\def\tc#1{\operatorname{{tc}}\ifx\empty#1\else(#1)\fi}
\def\tcm#1{\operatorname{{tc}^{M}}\ifx\empty#1\else(#1)\fi}
\def\cat#1{\operatorname{{cat}}\ifx\empty#1\else(#1)\fi}
\def\catBB#1{\operatorname{{cat}^{B}_{B}}\ifx\empty#1\else(#1)\fi}
\def\catB#1{\operatorname{{cat}^{*}_{B}}\ifx\empty#1\else(#1)\fi}
\def\homeo{\approx}
\def\integral{\mathbb{Z}}
\def\field{\mathbb{F}}
\def\Int#1{\operatorname{Int}({#1})}
\def\ad{\operatorname{ad}}
\def\id{\operatorname{id}}
\def\midvert{\, \mathstrut \vrule \,}
\def\cell[#1||#2]{[#1|\{#2\}]}
\def\wgt#1{\operatorname{wgt}\ifx\empty#1\else(#1)\fi}
\def\Mwgt#1{\operatorname{Mwgt}\ifx\empty#1\else(#1)\fi}
\def\wgtBb#1{\operatorname{wgt_{B}}\ifx\empty#1\else(#1)\fi}
\def\wgtBB#1{\operatorname{wgt_{B}^{B}}\ifx\empty#1\else(#1)\fi}
\def\MwgtBb#1{\operatorname{Mwgt_{B}}\ifx\empty#1\else(#1)\fi}
\def\MwgtBB#1{\operatorname{Mwgt_{B}^{B}}\ifx\empty#1\else(#1)\fi}
\def\mwgtBb#1{\operatorname{wgt_{B}}\ifx\empty#1\else(#1)\fi}
\def\mwgtBB#1{\operatorname{wgt_{B}^{B}}\ifx\empty#1\else(#1)\fi}
\def\tcwgt#1{\operatorname{tc-wgt}\ifx\empty#1\else(#1)\fi}
\def\tcMwgt#1{\operatorname{tc-Mwgt}\ifx\empty#1\else(#1)\fi}
\def\cuplenBb#1{\operatorname{cup_{B}}\ifx\empty#1\else(#1)\fi}
\def\cuplenBB#1{\operatorname{cup_{B}^{B}}\ifx\empty#1\else(#1)\fi}
\def\proj{\operatorname{proj}}
\def\mathbold#1{\text{\boldmath{$#1$}}}
\begin{document}
\ifdefined\doublespace
\baselineskip20pt
\fi
%
%
\title{A short proof for $\tc{K}=4$} 
%
%
\author[Iwase]{Norio Iwase}
\email{iwase@math.kyushu-u.ac.jp\vskip-1ex}
\author[Sakai]{Michihiro Sakai}
\email{sakai@kurume-nct.ac.jp\vskip-1ex}
\author[Tsutaya]{Mitsunobu Tsutaya}
\email{tsutaya@math.kyushu-u.ac.jp}
%
%
\address[Iwase]
{Faculty of Mathematics,
 Kyushu University,
 Fukuoka 819-0395, Japan\vskip-1ex}
\address[Sakai]
{Liberal Arts, 
 National Institute of Technology, 
 Kurume College, 
 Fukuoka, 830-8555, Japan\vskip-1ex}
\address[Tsutaya]
{Faculty of Mathematics,
 Kyushu University,
 Fukuoka 819-0395, Japan}
%
%
\date{\today}
%
%
\keywords{topological complexity, fibrewise homotopy theory, $A_{\infty}$ structure, Lusternik-Schnirelmann category, module weight.}
%
%
\subjclass[2010]{Primary 54H25, Secondary 55P50, 55T10}
%
%
\begin{abstract}
We show a method to determine topological complexity from the fibrewise view point, which provides an alternative proof for $\tc{K}=4$, where $K$ denotes Klein bottle. 
\end{abstract}
%
%
\maketitle

\section{Introduction}
\vspace{-.5ex}
The topological complexity is introduced in \cite{MR1957228} by M. Farber for a space $X$ and is denoted by $\TC{X}$: 
$\TC{X}$ is the minimal number $m \!\geq\! 1$ such that $X{\times}X$ is covered by $m$ open subsets $U_{i} \ (1\!\leq\!i\!\leq\!m)$, each of which admits a continuous section $s_{i} : U_{i} \!\to\! \mathcal{P}(X)=\{u:[0,1] \!\to\! X\}$ for the fibration $\varpi : \mathcal{P}(X) \to X{\times}X$ given by $u \mapsto (u(0),u(1))$.
Similarly, the monoidal topological complexity of $X$ denoted by $\TCM{X}$ is the minimal number $m \!\geq\! 1$ such that $X{\times}X$ is covered by $m$ open subsets $U_{i} \!\supset\! \Delta{X} \ (1\!\leq\!i\!\leq\!m)$, each of which admits a section  $s_{i} : U_{i} \!\to\! \mathcal{P}(X)$ of $\varpi : \mathcal{P}(X) \to X{\times}X$ such that $s_{i}(x,x)$ is the constant path at $x$ for any $(x,x) \!\in\! U_{i}\cap\Delta{X}$. 
In this paper, we denote $\tc{X}=\TC{X}{-}1$ and $\tcm{X}=\TCM{X}{-}1$. 

Let $E\!=\!(E,B;p,s)$ be a fibrewise pointed space, i.e, $p : E \!\to\! B$ is a fibrewise space with a section $s : B \!\to\! E$.
For a fibrewise pointed space $E'\!=\!(E',B';p',s')$ and a fibrewise pointed map $f : E' \!\to\! E$, we have pointed and unpointed versions of fibrewise L-S category, denoted by $\catBB{f}$ and $\catB{f}$, respectively: 
$\catBB{f}$ is the minimal number $m\!\geq\!0$ such that $E'$ is covered by $(m{+}1)$ open subsets $U_{i}$ and $f_{i}\!=\!f\vert_{U_{i}}$ is fibrewise pointedly fibrewise compressible into $s(B)$, and $\catB{f}$ is the minimal number $m\!\geq\!0$ such that $E$ is covered by $(m{+}1)$ open subsets $U_{i}$ and $f_{i}\!=\!f\vert_{U_{i}}$ is fibrewise-unpointedly fibrewise compressible into $s(B)$.
We denote $\catBB{\id_{E}}\!=\!\catBB{E}$ and $\catB{\id_{E}}\!=\!\catB{E}$ (see \cite{MR2556074}).
Then by definition, $\tc{X} \!\leq\! \tcm{X}$ for a space $X$, $\catB{E} \!\leq\! \catBB{E}$ for a fibrewise pointed space $E$, and $\catB{f} \!\leq\! \catBB{f}$, $\catB{f} \!\leq\! \catB{E}$ and  $\catBB{f} \!\leq\! \catBB{E}$ for a fibrewise pointed map $f : E' \!\to\! E$.

In \cite{MR2556074}, the $m$-th fibrewise projective space $P_{\!\!B\,}^{m}\Omega_{B}E$ of the fibrewise loop space $\Omega_{B}(E)$ is introduced with a natural map $e^{E}_{m} : P_{\!\!B\,}^{m}\Omega_{B}E \!\to\! E$.
Using them, we characterise some numerical invariants:
firstly, the fibrewise cup-length $\cuplenBb{E;h}$ is $\max\left\{m{\geq}0 \midvert \exists_{\{u_{1},{\cdots},u_{m}\} \subset H^{*}(E,s(B))}  \ u_{1}{\cdots}u_{m}\not=0\right\}$. 
Secondly, the fibrewise categorical weight $\wgtBb{E;h}$ is the smallest number $m$ such that $e^{E}_{m} : P_{\!\!B\,}^{m}\Omega_{B}E \!\to\! E$ induces a monomorphism of generalised cohomology theory $h^{*}$.
Thirdly, the fibrewise module weight $\MwgtBb{E;h}$ is the least number $m$ such that $e^{E}_{m} : P_{\!\!B\,}^{m}\Omega_{B}E \!\to\! E$ induces a split monomorphism of generalised cohomology theory $h^{*}$ as an $h_{*}h$-module.
The latter two invariants are versions of categorical weight introduced by Rudyak \cite{MR1679849} and Strom \cite{MR1443893} whose origin is in Fadell-Husseini \cite{MR1317569}.
We obtain the following.
\vspace{-1ex}

\begin{thm}\label{thm:weight-catb}
$\cuplenBb{E;h}\le\wgtBb{E;h}\le\MwgtBb{E;h}\le\catB{E}\le\catBB{E}$.
\end{thm}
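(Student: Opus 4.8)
The plan is to prove the chain by treating each inequality separately, noting first that the rightmost one, $\catB{E}\le\catBB{E}$, is already recorded above as an immediate consequence of the definitions. For the rest I will repeatedly use the canonical fibrewise inclusions $j_m : P_B^m\Omega_B E\to P_B^{m+1}\Omega_B E$ satisfying $e^E_{m}=e^E_{m+1}\circ j_m$, so that $(e^E_m)^*=j_m^*\circ(e^E_{m+1})^*$ on $h^*$; this shows that once $(e^E_m)^*$ is a monomorphism (resp. split monomorphism) the same holds at all larger indices, so both weights are well defined as least such indices. The inequality $\wgtBb{E;h}\le\MwgtBb{E;h}$ is then immediate: a split monomorphism is in particular a monomorphism, so every index realising a split monomorphism of $h_*h$-modules also realises a monomorphism, and comparing least indices gives the claim.

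For $\cuplenBb{E;h}\le\wgtBb{E;h}$ I would argue as follows. A relative class $u\in H^*(E,s(B))$ is exactly one with $(e^E_0)^*u=0$, since $e^E_0$ is the fibrewise inclusion of the section $s(B)$. The plan is to use the multiplicativity of the fibrewise projective filtration of \cite{MR2556074}: a cup product of $m$ relative classes, each killed by $(e^E_0)^*$, lies in the kernel of $(e^E_{m-1})^*$ --- the fibrewise counterpart of the superadditivity of categorical weight over cup products. Consequently, if $u_1\cdots u_m\ne 0$ with all $u_i\in H^*(E,s(B))$, then $(e^E_{m-1})^*$ fails to be injective, whence $\wgtBb{E;h}\ge m$ by definition; maximising over such products yields the inequality.

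The substantive step is $\MwgtBb{E;h}\le\catB{E}$. Writing $m=\catB{E}$, the crux is a fibrewise, unpointed Ganea-type characterisation: an open cover of $E$ by $m{+}1$ sets on each of which $\id_E$ is fibrewise compressible into $s(B)$ should assemble, via the Svarc--Ganea construction transported to the fibrewise world, into a fibrewise map $\sigma:E\to P_B^m\Omega_B E$ with $e^E_m\circ\sigma\simeq_B\id_E$. Granting such a section, $\sigma^*$ splits $(e^E_m)^*$ on $h^*(E)$, and being induced by a map of spaces it is a homomorphism of $h_*h$-modules, so $(e^E_m)^*$ is a split monomorphism of $h_*h$-modules and $\MwgtBb{E;h}\le m$ follows. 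I expect the main obstacle to be precisely the construction of $\sigma$: it will require $e^E_m$ to behave as a fibrewise fibration (after the usual conversion) and the local compressions to be glued by a fibrewise partition of unity compatible with the Milnor-type model $P_B^m\Omega_B E$, all without basepoint compatibility since $\catB{}$ is the unpointed invariant. Once the section is produced, the remaining verifications are formal.
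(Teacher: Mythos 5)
Your proposal is correct in outline and shares the paper's architecture: the only substantive inequality is $\MwgtBb{E;h}\le\catB{E}$, and both you and the paper obtain it by producing from the categorical open cover a fibrewise map $\sigma : E \to P_{\!\!B\,}^{m}\Omega_{B}E$ with $e^{E}_{m}\circ\sigma \simeq_{B} \id_{E}$, whose induced map splits $(e^{E}_{m})^{*}$ as a map of $h_{*}h$-modules. The differences are in how $\sigma$ is produced and in how the easy inequalities are handled. You propose a Svarc-type gluing of local sections by a fibrewise partition of unity; the paper instead converts the cover into an unpointed fibrewise compression of the diagonal $\Delta_{B} : E \to \prod_{B}^{m+1}E$ into the fibrewise fat wedge $\prod_{B}^{[m+1]}E$, identifies $P_{\!\!B\,}^{m}\Omega_{B}E$ with the fibrewise homotopy pull-back of $\Delta_{B}$ and the fat-wedge inclusion (using \cite{MR2427411} to extend the classical construction cell-wise over the base), and then invokes James--Morris \cite{MR1130605} to turn the compression into the section $\sigma$. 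Both routes are classical, but the paper's Whitehead-style route avoids exactly the partition-of-unity gluing in the unpointed fibrewise setting that you flag as the delicate point, so it is the cleaner way to fill the one step you leave unexecuted. For $\cuplenBb{E;h}\le\wgtBb{E;h}\le\MwgtBb{E;h}$ you give the standard direct arguments (superadditivity of weight over cup products, and split mono $\Rightarrow$ mono combined with the stabilisation $(e^{E}_{m})^{*}=j_{m}^{*}\circ(e^{E}_{m+1})^{*}$, which is also needed to make the least indices well defined); the paper simply delegates these to \cite[Theorem 8.6]{MR2556074}, so your version is more self-contained there, with correct indexing.
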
\vspace{-1.5ex}
\vspace{-2ex}\begin{proof}
Let $\catB{E}\!=\! m$.
Then there is a covering of $E$ with $m{+}1$ open subsets $\{U_{i} \midvert 0 \!\leq\! i \!\leq\! m\}$ such that each $U_{i}$ can be compressed into $s(B) \!\subset\! E$.
So, there is an unpointed fibrewise homotopy of $\id : E \!\to\! E$ to a map $r_{i} : E \!\to\! E$ satisfying $r_{i}(U_{i}) \!\subset\! s(B)$, which gives an unpointed fibrewise compression of the fibrewise diagonal $\Delta_{B} : E \!\to\! \prod_{B}^{m+1} E$ into the fibrewise fat wedge $\prod_{B}^{[m+1]} E \!\subset\! \prod_{B}^{m+1} E$.
Since a continuous construction on a space can be extended on a cell-wise trivial fibrewise space by \cite{MR2427411}, the fibrewise projective $m$-space $P^{m}_{\!\!B\,}\Omega_{B}E$ has the fibrewise homotopy type of the fibrewise homotopy pull-back of $\Delta_{B} : E \to \prod^{m+1}_{B}E$ and the inclusion $\prod_{B}^{[m+1]} E \subset \prod_{B}^{m+1} E$.
Hence by James-Morris \cite{MR1130605}, we have a map $\sigma : E \to P^{m}_{\!\!B\,}\Omega_{B}E$ which is an unpointed fibrewise homotopy inverse of $e^{E}_{m} : P^{m}_{\!\!B\,}\Omega_{B}E \to E$, and hence we obtain $\MwgtBb{E;h}$ \!$\leq$\! $m$ \!$=$\! $\catB{E}$.
Combining this with \cite[Theorem 8.6]{MR2556074}\footnote{As is mentioned in \cite{MR2923451}, the equality of $\tcm{}$ and $\tc{}$ stated in \cite[Theorem 1.13]{MR2556074} is appeared to be an open statement. But the inequality in \cite[Theorem 8.6]{MR2556074} does not depend on the open statement.}, we obtain the theorem.
\end{proof}\vspace{-1ex}

From now on, we assume that $(E,B;p,s)$ is given by $E\!=\!X{\times}X$, $B\!=\!X$, $p\!=\!\proj_{1}: X{\times}X \!\to\! X$ and $s\!=\!\Delta : X \!\to\! X{\times}X$ the diagonal map, and so we have $\catB{E}=\tc{X}$ and $\catBB{E}=\tcm{X}$ by \cite{MR2556074,MR2923451}.
Hence we obtain the following by Theorem \ref{thm:weight-catb}.\vspace{-1ex}
\begin{thm}\label{thm:weight-tc}
$\wgtBb{E;h}\le\MwgtBb{E;h}\le\tc{X}\le\tcm{X}$\ for a space $X$.
\end{thm}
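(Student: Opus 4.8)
The plan is to obtain this statement as an immediate specialization of Theorem \ref{thm:weight-catb} to the particular fibrewise pointed space under consideration. First I would fix the fibrewise structure just introduced, namely $E = X{\times}X$ with base $B = X$, projection $p = \proj_{1}$, and section $s = \Delta$. The whole content then rests on the two identifications $\catB{E} = \tc{X}$ and $\catBB{E} = \tcm{X}$, which hold for this choice of $E$ by the results of \cite{MR2556074,MR2923451} recalled immediately before the statement.

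Granting these, the argument is a single substitution. Applying Theorem \ref{thm:weight-catb} to this $E$ gives the chain
\begin{equation*}
\wgtBb{E;h} \le \MwgtBb{E;h} \le \catB{E} \le \catBB{E},
\end{equation*}
and replacing $\catB{E}$ by $\tc{X}$ and $\catBB{E}$ by $\tcm{X}$ yields exactly the asserted inequalities. The leftmost term $\cuplenBb{E;h}$ of Theorem \ref{thm:weight-catb} is simply dropped, since it plays no role here.

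Since the statement is a formal corollary, I do not expect any genuine obstacle: all of the substantive work has already been carried out in Theorem \ref{thm:weight-catb} (where the fibrewise projective space model and the compression-into-$s(B)$ argument produce the key bound $\MwgtBb{E;h} \le \catB{E}$) and in the cited identifications of the fibrewise Lusternik--Schnirelmann categories with the reduced topological complexities. The only point deserving a moment's care is to confirm that the chosen $p$ and $s$ indeed realize $X{\times}X$ as precisely the fibrewise pointed space for which those identifications were established; but this is exactly the setup fixed in the paragraph preceding the theorem, so no further verification is needed.
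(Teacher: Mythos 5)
Your proposal matches the paper's own argument: the theorem is stated as an immediate consequence of Theorem \ref{thm:weight-catb} applied to $E=X{\times}X$, $B=X$, $p=\proj_{1}$, $s=\Delta$, using the identifications $\catB{E}=\tc{X}$ and $\catBB{E}=\tcm{X}$ from \cite{MR2556074,MR2923451}. No gaps; this is the same specialization the paper performs.
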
\vspace{-1ex}

If $h$ is the ordinary cohomology with coefficients in a ring $R$, we write $\cuplenBb{E;h}$, $\wgtBb{E;h}$ and $\Mwgt{E;h}$ as $\cuplenBb{E;R}$, $\wgtBb{E;R}$ and $\Mwgt{E;R}$, respectively. 
We might disregard $R$ later in this paper, if $R=\field_{2}$ the prime field of characteristic $2$.

As an application, we give an alternative proof of a result recently announced by several authors.
Let $K_{q}$ be the non-orientable closed surface of genus $q\!\geq\!1$, and denote $K\!=\!K_{2}$.\vspace{-1ex}
\begin{thm}[Cohen-Vandembroucq \cite{CV}]
\label{thm_tc(K)}
For $q \!\geq\! 2$, we have $\wgt{K_{q}}=\Mwgt{K_{q}}=\tc{K_{q}}=\tcm{K_{q}}=4$ and $\TC{K_{q}}=\TCM{K_{q}}%
=5$. 
\end{thm}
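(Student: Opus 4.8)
The plan is to squeeze all four homotopy invariants to $4$ using the chain of Theorem~\ref{thm:weight-tc}. Fix the fibrewise setting $E=K_q{\times}K_q$, $B=K_q$ of the paper and let $h$ be ordinary cohomology with $\field_2$-coefficients, so that the quantities $\wgt{K_q}$ and $\Mwgt{K_q}$ of the statement are, in the notation introduced above, $\wgtBb{E;h}$ and $\MwgtBb{E;h}$, and Theorem~\ref{thm:weight-tc} reads $\wgtBb{E;h}\le\MwgtBb{E;h}\le\tc{K_q}\le\tcm{K_q}$. Hence it suffices to establish the two extreme bounds $\tcm{K_q}\le 4$ and $\wgtBb{E;h}\ge 4$: everything between them is then forced to equal $4$, and the assertions $\TC{K_q}=\tc{K_q}+1=5$ and $\TCM{K_q}=\tcm{K_q}+1=5$ follow immediately.

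For the upper bound I would use that $K_q$ is a closed surface, hence a $2$-dimensional CW complex. The standard dimensional estimate gives $\tc{K_q}\le 2\dim K_q=4$; in the fibrewise language this is realized by covering $E$ with five opens each unpointedly compressible into $s(B)$, which certifies $\catBB{E}\le 4$, and by the identification $\catBB{E}=\tcm{K_q}$ yields $\tcm{K_q}\le 4$. This bounds every term of the chain above by $4$, so the whole content of the theorem is concentrated in the single lower bound.

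The real work is $\wgtBb{E;h}\ge 4$. I would first record that ordinary cup-length is insufficient. In $H^*(K_q;\field_2)$ the two relevant degree-one generators $\alpha,\beta$ obey a Wu-type relation (for the Klein bottle, $\alpha^2=\alpha\beta$ and $\beta^2=0$), and a direct calculation shows that every four-fold product of the zero-divisors $\bar\alpha=\alpha{\times}1+1{\times}\alpha$ and $\bar\beta=\beta{\times}1+1{\times}\beta$ vanishes in $H^*(E;\field_2)$, while a three-fold product such as $\bar\alpha^{2}\bar\beta$ survives. Thus $\cuplenBb{E;h}=3$ and the cup-length bound only delivers $\tc{K_q}\ge 3$. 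To recover the missing unit one must exploit that fibrewise weight can strictly exceed cup-length, i.e.\ one must prove that $e^{E}_{3}:P^{3}_{\!\!B\,}\Omega_{B}E\to E$ fails to be a monomorphism in $h^*$.

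To see this I would locate a zero-divisor of fibrewise weight $2$. The relation $\alpha^2=\alpha\beta$ is precisely the trace of a nontrivial secondary operation: over $\field_2$ it is governed by $\mathrm{Sq}^{1}$, which is nonzero on $K_q$ exactly because $K_q$ is non-orientable, and this secondary structure—invisible to the cup product—is detected by the second stage $P^{2}_{\!\!B\,}\Omega_{B}E$ of the fibrewise projective filtration. Multiplying such a weight-$2$ class by two further weight-one zero-divisors produces a class of total fibrewise weight $4$ that is annihilated by $(e^{E}_{3})^{*}$ yet nonzero upstairs, giving $\wgtBb{E;h}\ge 4$. The main obstacle is exactly this detection step: one must analyze the $A_{\infty}$/fibrewise-projective-space filtration of \cite{MR2556074} carefully enough to certify that the secondary class genuinely raises the weight from $3$ to $4$. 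This is where the passage from $\cuplenBb{E;h}$ to $\wgtBb{E;h}$ and $\MwgtBb{E;h}$ in Theorem~\ref{thm:weight-catb} does all the work, and where the hypothesis $q\ge 2$—which guarantees two independent degree-one generators, unlike $\mathbb{RP}^2=K_1$—is indispensable.
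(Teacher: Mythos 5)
You have the right skeleton, and it is the same one the paper uses: sandwich everything via $\wgtBb{E;h}\le\MwgtBb{E;h}\le\tc{K_q}\le\tcm{K_q}\le 2\cat{K_q}\le 2\dim K_q=4$, so that the whole theorem reduces to the single lower bound $\wgtBb{E;h}\ge 4$; and you correctly note that the zero-divisor cup-length is only $3$, so this bound cannot come from four-fold products of weight-one classes. (Minor slip in the upper bound: unpointed compressions certify $\catB{E}\le 4$, i.e.\ $\tc{K_q}\le 4$; for $\tcm{K_q}=\catBB{E}\le 4$ one needs the pointed version, which the paper gets from $\tcm{K_q}\le 2\cat{K_q}$.) The genuine gap is that at the one point where the theorem is actually proved, your argument stops being a proof. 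You posit a zero-divisor of fibrewise weight $2$ arising from a ``secondary operation governed by $\mathrm{Sq}^1$,'' propose to multiply it by two weight-one zero-divisors, and then explicitly defer the ``detection step'' as the main obstacle. That detection step is the entire technical content of the paper. Concretely: (i) since every four-fold product of zero-divisors vanishes in $H^4(K_q{\times}K_q;\field_2)$, you must exhibit a specific nonzero three-fold product having a factor of fibrewise weight $\ge 2$, and verify both the nonvanishing and the weight claim; neither is done, and the paper's witness $z{\otimes}z$ is not presented as such a product at all. (ii) The assertion that nonorientability ($\mathrm{Sq}^1\ne 0$) forces a weight-$2$ class detected by $P^{2}_{\!\!B}\Omega_{B}E$ is exactly the kind of statement that requires the careful analysis you acknowledge omitting.

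For comparison, the paper's route through this step is an explicit combinatorial computation rather than a secondary-operation argument. It shows $\Omega_{B}E$ is fibrewise $A_\infty$-equivalent to the covering space $\widehat K=\widetilde K\times_{\ad}\pi$, so that $P^{m}_{\!\!B}\Omega_{B}E\simeq_B \widetilde K\times_{\ad}P^{m}\pi$ has cells indexed by a cell of $K$ together with a bar-resolution cell of $\pi=\pi_1(K)$, with an explicit mod-$2$ boundary formula (Proposition \ref{prop:boundary-formula}). The pullback of the generator $z{\otimes}z$ of $H^4(E;\field_2)$ is represented on $P^{3}_{\!\!B}\widehat K$ by an explicit cocycle $w$, and the paper writes down a $3$-cochain $u$ (built from $t(m)=\lfloor m/2\rfloor$) with $\delta u\underset{(2)}{=}w$, giving $(e^{E}_{3})^{*}(z{\otimes}z)=0$, hence $\wgtBb{z{\otimes}z}\ge 4$; the case $q>2$ follows by naturality along $\phi_q:\pi_1^q\to\pi$. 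No substitute for this computation appears in your proposal, so the lower bound --- and with it the theorem --- is not established.
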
\vspace{-1ex}

\vspace{-1ex}
\begin{cor}\label{choi}
The fibration $S^{1} \!\hookrightarrow\! K \!\to\! S^{1}$ is a counter example to the conjecture saying $\TC{E} \leq \TC{F}{\,\times\!}\TC{B}$ for a fibration $F \!\to\! E \!\to\! B$, since $\TC{K}\!=\!5$ and $\TC{S^{1}}=2$.
\end{cor}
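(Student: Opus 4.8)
The plan is to read off the corollary from the value $\TC{K}=5$ established in Theorem~\ref{thm_tc(K)}, together with the classical fact $\TC{S^{1}}=2$, after checking that the Klein bottle really sits in a fibration of the asserted shape. First I would recall the mapping-torus description of $K$: writing $K\homeo([0,1]{\times}S^{1})/{\sim}$ with $(0,z)\!\sim\!(1,\bar z)$, the projection onto the interval coordinate exhibits $K$ as a locally trivial bundle with fibre $S^{1}$ over the base circle $[0,1]/(0\!\sim\!1)\homeo S^{1}$. In particular $S^{1}\hookrightarrow K\to S^{1}$ is a Hurewicz fibration with $F\homeo S^{1}$ and $B\homeo S^{1}$, so it is a legitimate instance against which to test the conjecture.

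Next I would supply the two numerical inputs. Farber's computation gives $\TC{S^{1}}=2$, which serves for both the fibre and the base. Since $K=K_{2}$, applying Theorem~\ref{thm_tc(K)} with $q=2$ yields $\TC{K}=5$. Feeding these into the conjectured bound $\TC{E}\le\TC{F}\cdot\TC{B}$ would force $5=\TC{K}\le\TC{S^{1}}\cdot\TC{S^{1}}=2\cdot2=4$, a contradiction; hence the displayed fibration is a counterexample.

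I do not anticipate a genuine obstacle here. All of the substance has already been spent in obtaining $\TC{K}=5$; the remaining ingredients are either classical ($\TC{S^{1}}=2$) or the elementary description of $K$ as a circle bundle over a circle. The only point that warrants a sentence of care is confirming that $K\to S^{1}$ is an honest fibration with circle fibre, and this is immediate from the mapping-torus picture. The corollary is therefore just the arithmetic observation that $5>4$ applied to this bundle, with the real work having been done in the preceding theorem.
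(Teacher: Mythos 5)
Your proposal is correct and matches the paper's (implicit) argument exactly: the corollary is stated as an immediate consequence of $\TC{K}=5$ from Theorem~\ref{thm_tc(K)} and the classical fact $\TC{S^{1}}=2$, giving $5>2\cdot2=4$. Your extra care in verifying that the mapping-torus projection $K\to S^{1}$ is a genuine $S^{1}$-bundle is a reasonable addition but not something the paper spells out.
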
\vspace{-1ex}

\section{Fibrewise Resolution of Klein Bottle}
\vspace{-.5ex}
For $q \!\geq\! 1$, $\pi_{1}(K_{q})$ is given by 
$\pi_{1}^{q}\!=\!\langle{b,b_{1},\ldots,b_{q-1} \midvert b_{1}^{2}{\cdots}b_{q-1}^{2}{=}b^{2}}\rangle$.
We know that $K_{q}$ is a CW complex with one $0$-cell $\ast$, $q$ $1$-cells $b, b_{1}, \ldots, b_{q-1}$ and one $2$-cell $\sigma_{q}$. 

\ifx\undefined\doublespace
\InsertBoxR{0}{%
\begin{picture}(90,80)(-4,0)
\multiput(10,10)(0,65){2}{\line(1,0){65}}
\multiput(10,10)(65,0){2}{\line(0,1){65}}
\multiput(48,22)(5,-5){2}{\line(1,1){12}}
\put(38,10){\vector(1,0){10}}
\put(33,10){\vector(1,0){10}}
\put(75,35){\vector(0,1){10}}
\put(10,50){\vector(0,-1){10}}
\put(33,75){\vector(1,0){10}}
\put(38,75){\vector(1,0){10}}
\put(40,-2){$b$}
\put(0,40){$a$}
\put(80,40){$a$}
\put(40,80){$b$}
\put(78,78){$*$}
\put(40,40){\makebox(6,6)[cc]{$\sigma$}}
\multiput(75,10)(-65,0){2}{\circle*{3}}
\multiput(75,75)(-65,0){2}{\circle*{3}}
\end{picture}
}
\fi
For $a\!=\!b_{1}b^{-1}\!$, we know $\pi_{1}^{2}\!=\!\{a^{k}b^{\ell} \midvert k,\ell\!\in\!\integral\}$ with a relation $aba\!=\!b$.
Let us denote $\varepsilon(\ell)\!=\!\frac{1{-}(-1)^{\ell}}{2}$, which is either $0 \,\text{or}\, 1$, to obtain \,$a^{k_{1}}b^{\ell_{1}}a^{k_{2}}b^{\ell_{2}}\!=\!a^{k_{1}+k_{2}+2\varepsilon(\ell_{1})k_{2}}b^{\ell_{1}+\ell_{2}}$, \,$b^{\pm1}(a^{k}b^{\ell})b^{\mp1}\!=\!a^{-k}b^{\ell}$ and \,$a^{\pm1}(a^{k}b^{\ell})a^{\mp1}\!=\!a^{k\pm2\varepsilon(\ell)}b^{\ell}$.
We denote $\bar{\tau}\!=\!\tau^{-1}$ to simplify expressions. 

We know the multiplication of $\pi_{1}^{2}\!=\!\pi_{0}(\Omega{K_{2}})$ is inherited from the loop addition.
Hence the natural equivalence $\Omega{K_{2}} \!\to\! \pi_{1}^{2}$ is an $A_{\infty}$-map, since a discrete group has no non-trivial $A_{\infty}$-structure on a given multiplication.

Let $E_{q}\!=\!(E_{q},B_{q};p_{q},s_{q})$ be the fibrewise pointed space, where $E_{q}\!=\!K_{q}{\times}K_{q}, \,B_{q}\!=\!K_{q}, \,p_{q}\!=\!\proj_{1}:K_{q}{\times}K_{q}\!\to\!K_{q}$ and $s_{q}\!=\!\Delta:K_{q}\!\to\!K_{q}{\times}K_{q}$. 
When $q\!=\!2$, we abbreviate $E_{2}$, $K_{2}$, $\sigma_{2}$ and $\pi_{1}^{2}$ as $E$, $K$, $\sigma$ and $\pi$, respectively in this paper.

\ifdefined\doublespace
\InsertBoxR{0}{%
\begin{picture}(90,70)(-4,5)
\multiput(10,10)(0,65){2}{\line(1,0){65}}
\multiput(10,10)(65,0){2}{\line(0,1){65}}
\multiput(48,22)(5,-5){2}{\line(1,1){12}}
\put(38,10){\vector(1,0){10}}
\put(33,10){\vector(1,0){10}}
\put(75,35){\vector(0,1){10}}
\put(10,50){\vector(0,-1){10}}
\put(33,75){\vector(1,0){10}}
\put(38,75){\vector(1,0){10}}
\put(40,-2){$b$}
\put(0,40){$a$}
\put(80,40){$a$}
\put(40,80){$b$}
\put(78,78){$*$}
\put(40,40){\makebox(6,6)[cc]{$\sigma$}}
\multiput(75,10)(-65,0){2}{\circle*{3}}
\multiput(75,75)(-65,0){2}{\circle*{3}}
\end{picture}
}
\fi
Let $\widetilde{K}\!=\!\underset{\mathbold{a} \in K}{\textstyle\bigcup}\, \pi_{1}(K;\mathbold{a},\ast) \!\to\! K$ be the universal covering space, and $\widehat{K} \!=\! \widetilde{K} {\times_{\ad}}\pi \!\to\! K$ be the associated covering space, where `$\ad$' is the equivalence relation on $\widetilde{K} {\times}\pi$ given by $([\kappa{\cdot}\lambda],g) \sim ([\kappa],hgh^{-1})$ for $g, \,h\!=\![\lambda] \in \pi$ and $[\kappa] \in \pi_{1}(K;\mathbold{a},\ast)$.
We regard $\widehat{K}\!=\!\underset{\mathbold{a} \in K}{\textstyle\bigcup}\, \pi_{1}(K,\mathbold{a})$.\vspace{-1ex}
\begin{prop}
$P_{\!\!B}^{m}\Omega_{B}E \simeq_{B} P_{\!\!B}^{m}\widehat{K}$ for all $m \!\geq\! 1$.
\end{prop}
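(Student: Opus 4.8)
The plan is to realise both $\Omega_{B}E$ and $\widehat{K}$ as fibrewise $A_{\infty}$-spaces over $B\!=\!K$, to produce a fibrewise $A_{\infty}$-equivalence between them, and then to apply the fibrewise projective space functor $P_{\!\!B}^{m}$, which carries fibrewise $A_{\infty}$-equivalences to fibrewise homotopy equivalences over $B$; since this holds for each $m$, the assertion for all $m \!\geq\! 1$ will follow at once.

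First I would identify the fibre of $\Omega_{B}E$ over a point $\mathbold{a} \!\in\! B\!=\!K$. Because $E\!=\!K{\times}K$ with $p\!=\!\proj_{1}$ and $s\!=\!\Delta$, the fibre $p^{-1}(\mathbold{a})\!=\!\{\mathbold{a}\}{\times}K$ is identified with $K$, and the section value $s(\mathbold{a})\!=\!(\mathbold{a},\mathbold{a})$ corresponds to $\mathbold{a}\!\in\!K$ under this identification. Hence the fibre of $\Omega_{B}E$ over $\mathbold{a}$ is the loop space of $K$ based at $\mathbold{a}$, whose set of path components is $\pi_{1}(K,\mathbold{a})$, and the monodromy obtained by transporting the basepoint around a loop at $\mathbold{a}$ is conjugation on $\pi_{1}(K,\mathbold{a})$.

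Next I would match this with $\widehat{K}$. By the very construction $\widehat{K}\!=\!\widetilde{K}{\times_{\ad}}\pi$, recorded as $\widehat{K}\!=\!\bigcup_{\mathbold{a}\in K}\pi_{1}(K,\mathbold{a})$ just before the statement, the fibre of $\widehat{K}$ over $\mathbold{a}$ is $\pi_{1}(K,\mathbold{a})$ and its monodromy is exactly the adjoint (conjugation) action. Thus $\widehat{K}$, viewed as a fibrewise topological group, is the fibrewise $\pi_{0}$ of $\Omega_{B}E$, and the two monodromies coincide.

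The heart of the argument is to upgrade this fibrewise identification to a fibrewise $A_{\infty}$-equivalence $\Omega_{B}E \simeq_{B} \widehat{K}$. On each fibre the natural map from the loop space of $K$ based at $\mathbold{a}$ to $\pi_{1}(K,\mathbold{a})$ is an $A_{\infty}$-equivalence, since, as noted above, a discrete group carries no non-trivial $A_{\infty}$-structure on its given multiplication. The fibrewise space $E\!\to\!B$ is a product and hence cell-wise trivial over $B\!=\!K$, so by \cite{MR2427411} this fibrewise $A_{\infty}$-equivalence---a continuous construction---extends to a genuine fibrewise $A_{\infty}$-map over $B$; the compatibility required for the extension is precisely the agreement of the two conjugation monodromies established above. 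Being a fibrewise $A_{\infty}$-map that is an equivalence on every fibre, it is a fibrewise homotopy equivalence. I expect the main obstacle to lie here: one must check that the fibrewise $A_{\infty}$-structures on $\Omega_{B}E$ and on $\widehat{K}$ are matched not merely over each point but coherently over the cells of $K$, which is exactly what the extension theorem of \cite{MR2427411}, together with the matching of monodromies, secures. Applying $P_{\!\!B}^{m}$ to this equivalence then yields $P_{\!\!B}^{m}\Omega_{B}E \simeq_{B} P_{\!\!B}^{m}\widehat{K}$ for all $m \!\geq\! 1$.
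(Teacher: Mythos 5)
Your overall strategy---collapse each fibre $\Omega(K,\mathbold{a})$ of $\Omega_{B}E$ onto its set of path components $\pi_{1}(K,\mathbold{a})$, check compatibility with the conjugation monodromy, upgrade this to a fibrewise $A_{\infty}$-equivalence $\Omega_{B}E\to\widehat{K}$, and then apply $P_{\!\!B}^{m}$---is the same as the paper's. However, three steps are either unjustified or justified by the wrong tool. First, you never explain why the fibre map $\Omega(K,\mathbold{a})\to\pi_{1}(K,\mathbold{a})$ is a homotopy equivalence: the observation that a discrete group carries no non-trivial $A_{\infty}$-structure settles only the multiplicative coherence of the map, not its being an equivalence. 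The equivalence holds because $K$ is a $K(\pi,1)$, so every component of $\Omega(K,\mathbold{a})$ is contractible; for a non-aspherical base the statement would simply be false, so asphericity must be invoked explicitly.

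Second, the existence and continuity of the global map $\Omega_{B}E\to\widehat{K}$ is not what the cell-wise extension result of \cite{MR2427411} delivers (the paper uses that reference for the fibrewise projective space construction itself, not here). The paper instead produces the map by the lifting criterion for covering spaces: writing $\Omega^{g}_{B}E$ for the component of a loop representing $g\in\pi$ and $\widehat{K}^{g}$ for the corresponding component of $\widehat{K}$, one checks that the image of $\pi_{1}(\Omega^{g}_{B}E)$ in $\pi_{1}(K)$ is the centralizer of $g$, which is exactly $\pi_{1}(\widehat{K}^{g})$, so $\Omega^{g}_{B}p$ lifts through the covering $\widehat{K}^{g}\to K$; your ``matching of monodromies'' is the germ of this computation but does not by itself produce the lift. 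Third, $P_{\!\!B}^{m}$ is a construction on fibrewise \emph{pointed} spaces, so a fibrewise unpointed equivalence is not yet enough: the paper first gets an unpointed fibrewise homotopy equivalence from Dold's criterion \cite{MR0073986} and then upgrades it to a fibrewise pointed one using that the trivial-loop section $K\to\Omega_{B}E$ is a fibrewise cofibration \cite{MR1361889}. None of these gaps is fatal, but each must be filled for the argument to close.
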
\vspace{-1ex}
\vspace{-1.5ex}\begin{proof}
For $[\gamma]\!=\!g\!\in\!\pi$, we denote by $\Omega^{g}_{B}E$ and $\widehat{K}^{g}$ the connected components of $\gamma\!\in\! \Omega_{B}E$ and $([\ast],g) \!\in\! \widetilde{K}{\times}_{\ad}\pi \!=\! \widehat{K}$, respectively.
Then the image of $\pi_{1}(\Omega^{g}_{B}E)$ in $\pi_{1}(K)$ is the centralizer of $g$, which is the same as $\pi_{1}(\widehat{K}^{g})$. Thus, there is a lift $\widehat{\Omega^{g}_{B}p} : \Omega^{g}_{B}E \to \widehat{K}^{g}$ of $\Omega^{g}_{B}p\!=\!\Omega_{B}p\vert_{\Omega^{g}_{B}E} : \Omega^{g}_{B}E \to K$ whose restriction to the fibre on $\mathbold{a}$ is the natural map $: \Omega(K,\mathbold{a})\cap\Omega^{g}_{B}E \!\to\! \pi_{1}(K,\mathbold{a})\cap\widehat{K}^{g}$.
Hence we obtain a lift $\widehat{\Omega_{B}p} : \Omega_{B}E \!\to\! \widehat{K}$ of $\Omega_{B}p : \Omega_{B}E \!\to\! K$ given by $\widehat{\Omega_{B}p}\vert_{\Omega^{g}_{B}E} \!=\! \widehat{\Omega^{g}_{B}p}$, whose restriction to the fibre on $\mathbold{a}$ is the natural map $: \Omega(K,\mathbold{a}) \!\to\! \pi_{1}(K,\mathbold{a})$.
Moreover, the restriction of $\widehat{\Omega_{B}p}$ to each fibre is a pointed homotopy equivalence since $K$ is a $K(\pi,1)$ space.
Then by Dold \cite{MR0073986}, $\widehat{\Omega_{B}p} : \Omega_{B}E \!\to\! \widehat{K}$ is a fibrewise homotopy equivalence.
Here, since the section $: K \!\to\! \Omega^{e}_{B}E$ of $\Omega^{e}_{B}p : \Omega^{e}_{B}E\!\to\!K$ given by trivial loops is a fibrewise cofibration, $\widehat{\Omega_{B}p}$ is a fibrewise pointed homotopy equivalence by James \cite{MR1361889}.
Moreover, $\widehat{\Omega_{B}p}$ is a fibrewise $A_{\infty}$-map since each fibre of $\widehat{K} \!\to\! K$ is a dicrete set.
Thus $P_{\!\!B}^{m}\Omega_{B}E \simeq_{B} P_{\!\!B}^{m}\widehat{K}$, $m \!\geq\! 1$.
\end{proof}\vspace{-1ex}

Firstly, the cell structure of $K$ is given as follows: let $\Lambda_{0}\!=\!\{*\}, \,\Lambda_{1}\!=\!\{{a,b}\}, \,\Lambda_{2}\!=\!\{\sigma\}$.
\par\vskip.5ex\noindent
\hfil$
K = \underset{0 \leq k \leq 2}{\textstyle\bigcup}\, \underset{\eta \in \Lambda_{k}}{\textstyle\bigcup}\, e^{k}_{\eta} = e^{0}_{*} \cup e^{1}_{a} \cup e^{1}_{b} \cup e^{2}_{\sigma}.
$\hfil
\par\vskip1ex\noindent
From now on, $e^{k}_{\eta}$ will be denoted by $[\eta]$ for $\eta \!\in\! \Lambda_{k}$, which is in the chain group $\integral\Lambda\!=\!\integral\{*,a,b,\sigma\}$, $\Lambda=\Lambda_{0} \cup \Lambda_{1} \cup \Lambda_{2}$.
The boundary of $[\eta]$ for $\eta \in \Lambda_{k}$ is expressed in $\integral\Lambda$ as follows:
\par\vskip0ex\noindent
\hfil$
\partial[\eta]=[\partial\eta],\quad 
\partial{*} = 0,\ \  
\partial{a} = 0,\ \  
\partial{b} = 0
 \ \ \text{and} \ \ 
\partial{\sigma} = 2a, 
$\hfil
\par\vskip0ex\noindent

Secondly, $P^{m}\pi$ is a $\Delta$-complex in the sense of Hatcher \cite{MR1867354}:
\par\vskip0ex\noindent
\hfil$
P^{m}\pi = \underset{0 \leq n \leq m}{\textstyle\bigcup}\, \underset{\omega=(g_{1},\ldots,g_{n}) \in \pi^{n}}{\textstyle\bigcup} e^{n}_{\omega},
$\hfil
\par\vskip1.0ex\noindent
In this paper, $e^{n}_{\omega}$ will be denoted by $[\omega]$ or $[g_{1}|\cdots|g_{n}]$ for $\omega \!=\! (g_{1},\ldots,g_{n})$ which is in the chain group $\underset{n=0}{\overset{m}{\oplus}}\otimes^{n}\integral{\pi}$ \!$\cong$\! $\underset{n=0}{\overset{m}{\oplus}}\integral{\pi}^{n}$.
The boundary of $[\omega]$ is expressed as follows:
\par\vskip.5ex\noindent
\hfil$
\partial{[\omega]} = [\partial\omega],\quad \partial\omega = \underset{i=0}{\overset{n}{\textstyle\sum}}(-1)^{i}\partial_{i}\omega,\quad
\partial_{i}{\omega} = \text{$
\left\{\begin{array}{l}
\partial_{0}{\omega} = (g_{2},\ldots,g_{n}),\quad i \!=\! 0,
\\[1ex]
(g_{1},\ldots,g_{i-1},g_{i}g_{i+1},g_{i+2},\cdots,g_{n}),\quad 0 \!<\! i \!<\! n,
\\[1ex]
\partial_{n}{\omega} = (g_{1},\ldots,g_{n-1}),\quad i \!=\! n,
\end{array}\right.$}
$\hfil
\par\vskip1.5ex\noindent
which coincides with the chain complex of $m$-th filtration of the bar resolution of $\pi$.

\ifx\undefined\doublespace
\InsertBoxR{0}{%
\begin{picture}(170,85)(-48,10)
\multiput(10,10)(0,65){2}{\line(1,0){65}}
\multiput(10,10)(65,0){2}{\line(0,1){65}}
\put(38,10){\vector(1,0){10}}
\put(33,10){\vector(1,0){10}}
\put(75,35){\vector(0,1){10}}
\put(10,50){\vector(0,-1){10}}
\put(33,75){\vector(1,0){10}}
\put(38,75){\vector(1,0){10}}
\put(0,40){\makebox(6,6)[cr]{\small$\cell[a||ba\omega\bar{a}\bar{b}]$}}
\put(80,40){\makebox(6,6)[cl]{\small$\cell[a||\omega]$}}
\put(40,-2){\makebox(6,6)[cc]{\small$\cell[b||a\omega\bar{a}]$}}
\put(40,82){\makebox(6,6)[cc]{\small$\cell[b||\omega]$}}
\put(40,40){\makebox(6,6)[cc]{$\cell[\sigma||\omega]$}}
\put(77,0){\makebox(6,6)[cl]{\small$\cell[*||a\omega\bar{a}]$}}
\put(3,0){\makebox(6,6)[cr]{\small$\cell[*||ba\omega\bar{a}\bar{b}]$}}
\put(77,79){\makebox(6,6)[cl]{\small$\cell[*||\omega]$}}
\put(3,79){\makebox(6,6)[cr]{\small$\cell[*||b\omega\bar{b}]$}}
\multiput(75,10)(-65,0){2}{\circle*{3}}
\multiput(75,75)(-65,0){2}{\circle*{3}}
\end{picture}
}
\fi
For $\tau \!\in\! \Lambda_{1}$, and $\omega \!\in\! {\pi}^{n}$, $\cell[\bar{\tau}||\omega]$ represents the same product cell as $\cell[\tau||\bar{\tau}\omega\tau]$ with orientation reversed, and we have $\cell[\bar{\tau}||\omega]\!=\!-\cell[\tau||\bar{\tau}\omega\tau]$, where $\bar{\tau}(g_{1},\ldots,g_{n})\tau\!=\!(\bar{\tau}g_{1}{\tau},\ldots,\bar{\tau}g_{n}{\tau})$.
To observe this, let us look at the end point of $\tau$, where the fibre lies: 
A $1$-cell $\tau$ is a path $\tau : I\!=\![0,1] \to K$ which has a lift to a path $\tilde\tau : I \to \widehat{K}$ with an initial data $[\lambda] \in \pi_{1}(K,\tau(1))$ given by $\tilde\tau(t)=[\tau_{t}{\cdot}\lambda{\cdot}\tau_{t}^{-1}] \in \pi_{1}(K,\tau(t))$, where we denote $\tau_{t}(s)=\tau(t{+}(1{-}t)s)$.

\ifdefined\doublespace
\InsertBoxR{0}{%
\begin{picture}(170,85)(-48,10)
\multiput(10,10)(0,65){2}{\line(1,0){65}}
\multiput(10,10)(65,0){2}{\line(0,1){65}}
\put(38,10){\vector(1,0){10}}
\put(33,10){\vector(1,0){10}}
\put(75,35){\vector(0,1){10}}
\put(10,50){\vector(0,-1){10}}
\put(33,75){\vector(1,0){10}}
\put(38,75){\vector(1,0){10}}
\put(0,40){\makebox(6,6)[cr]{\small$\cell[a||ba\omega\bar{a}\bar{b}]$}}
\put(80,40){\makebox(6,6)[cl]{\small$\cell[a||\omega]$}}
\put(40,-2){\makebox(6,6)[cc]{\small$\cell[b||a\omega\bar{a}]$}}
\put(40,82){\makebox(6,6)[cc]{\small$\cell[b||\omega]$}}
\put(40,40){\makebox(6,6)[cc]{$\cell[\sigma||\omega]$}}
\put(77,0){\makebox(6,6)[cl]{\small$\cell[*||a\omega\bar{a}]$}}
\put(3,0){\makebox(6,6)[cr]{\small$\cell[*||ba\omega\bar{a}\bar{b}]$}}
\put(77,79){\makebox(6,6)[cl]{\small$\cell[*||\omega]$}}
\put(3,79){\makebox(6,6)[cr]{\small$\cell[*||b\omega\bar{b}]$}}
\multiput(75,10)(-65,0){2}{\circle*{3}}
\multiput(75,75)(-65,0){2}{\circle*{3}}
\end{picture}
}
\fi
Thirdly, since $\Omega_{B}E$ is fibrewise $A_{\infty}$-equivalent to $\widehat{K}$, 
$P_{\!\!B}^{m}\Omega_{B}E$ is fibrewise pointed homotopy equivalent to $P_{\!\!B}^{m}\hat{K}$.
A $k{+}n$-cell of $P_{\!\!B}^{m}\Omega_{B}E \simeq_{B} P_{\!\!B}^{m}\widehat{K} = \widetilde{K} {\times_{\ad}} P^{m}\pi$ is described as a product cell of a $k$-cell $[\eta]$ in $K$ and a $\Delta$ $n$-cell $[\omega]$ in $P^{m}\pi$, and is denoted by $e^{n+k}_{(\eta;\omega)} \homeo \Int{\Box^{k}}{\times}\Int{\Delta^{n}}$. 
\par\vskip1ex\noindent
\hfil$
P_{\!\!B\,}^{m}\Omega_{B}E \simeq_{B} 
P_{\!\!B\,}^{m}\widehat{K} = \underset{0 \leq n \leq m}{\textstyle\bigcup}\, 
\underset{\omega\in {\pi}^{n}}{\textstyle\bigcup}\, \left(e^{n}_{(*;\omega)} \cup e^{n+1}_{(b;\omega)} \cup e^{n+1}_{(b_{1};\omega)} \cup e^{n+2}_{(\sigma;\omega)}\right).
$\hfil
\par\vskip1.5ex\noindent

In this paper, $e^{n+k}_{(\eta;\omega)}$ will be denoted by $\cell[\eta||\omega]$ or $\cell[\eta||g_{1}|\cdots|g_{n}]$, for $(\eta;\omega) \!=\! (\eta;g_{1},\ldots,g_{n}) \!\in\! \Lambda_{k}{\times}{\pi}^{n}$, in the chain group $C^{*}(P_{\!\!B}^{m}\widehat{K};\integral)=\!\underset{n=0}{\overset{m}{\oplus}}\,\integral\,\Lambda_{0}{\times}{\pi}^{n} \oplus \underset{n=1}{\overset{m+1}{\oplus}}\,\integral\,\Lambda_{1}{\times}{\pi}^{n-1} \oplus \underset{n=2}{\overset{m+2}{\oplus}}\,\integral\,\Lambda_{2}{\times}{\pi}^{n-2}$.

Let $[\omega]\!=\![g_{1}|g_{2}|\cdots|g_{n}]$ be a $\Delta$ $n$-cell in $P^{m}\widehat{K}$ with $g_{i} \!\in\! \pi_{1}(K,\tau(1))$.
Then the boundary of a product cell $\cell[\tau||\omega]$ of $\omega$ with a $1$-cell $[\tau]$ of $K$ is the union of cells \,$\cell[\tau||\partial_{i}\omega]$, \,$0 \!\leq\! i \!\leq\! n$, \,$[\omega]$ and $[\tau\omega\bar{\tau}]\!=\![{\tau}g_{1}\bar{\tau}|{\tau}g_{2}\bar{\tau}|\cdots|{\tau}g_{n}\bar{\tau}]$.
Similarly, the boundary of a product cell $\cell[\sigma||\omega]$ of $\omega$ with a $2$-cell $[\sigma]$ of $K$ is the union of cells \,$\cell[\sigma||\partial_{i}\omega]$, \,$0 \!\leq\! i \!\leq\! n$, \ifdefined\,$\cell[a||\omega]$, \,$\cell[b||\omega]$, $\cell[a||ba\omega\bar{a}\bar{b}]$ and $\cell[b||a\omega\bar{a}]$\else\,$\cell[b||\omega]$, \,$\cell[b||b\omega\bar{b}]$, $\cell[b_{1}||\omega]$ and $\cell[b_{1}||b_{1}\omega\bar{b}_{1}]$\fi.

Hence the boundary formula of a $\Delta$ cell of $P_{\!\!B}^{m}\widehat{K}$ in the chain group modulo $2$ is given as follows, where we denote $m\!\underset{(p)}=\!n$\vspace{-2ex} if $m$ is equal to $n$ modulo $p$ for $m, n \!\in\! \integral$ and $p \!\geq\!2$.
\begin{prop}\label{prop:boundary-formula}
\begin{enumerate}
\item $\partial\cell[\tau||\omega] \underset{(2)}= \cell[*||\omega] - \cell[*||\tau\omega\bar{\tau}] - \cell[\tau||\partial\omega] \ \ \text{for $\tau \!\in\! \Lambda_{1}$\, and \,$\omega \!\in\! {\pi}^{n}$}$,\vspace{-.5ex} where we denote $\cell[\tau||\partial\omega]=\underset{i=0}{\overset{n}{\textstyle\sum}}(-1)^{i}\cell[\tau||\partial_{i}\omega]$.
\vspace{.5ex}\item 
$\partial\cell[\sigma||\omega] \underset{(2)}= \cell[a||\omega] + \cell[a||ba\omega\bar{a}\bar{b}] - \cell[b||\omega] + \cell[b||a\omega\bar{a}] + \cell[\sigma||\partial\omega]$ \ for \,$\omega \!\in\! {\pi}^{n}$,\vspace{-.5ex} where we denote $\cell[\sigma||\partial\omega]=\underset{i=0}{\overset{n}{\textstyle\sum}}(-1)^{i}\cell[\sigma||\partial_{i}\omega]$.
\vspace{-1ex}
\end{enumerate}
\end{prop}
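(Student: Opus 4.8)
The plan is to read off both formulas as boundaries of product cells in the twisted product complex $P_{\!\!B}^{m}\widehat{K} = \widetilde{K}\times_{\ad}P^{m}\pi$, using the Leibniz rule $\partial(e\times f) = (\partial e)\times f + (-1)^{\dim e}\,e\times(\partial f)$ together with the monodromy of the covering $\widehat{K}\to K$ along the attaching maps of the cells of $K$. The only transport law needed is the behaviour of the fibre coordinate: by the lift formula $\tilde\tau(t)=[\tau_{t}\lambda\tau_{t}^{-1}]$, moving forward along a $1$-cell $\tau$ conjugates the fibre coordinate by $\bar\tau(-)\tau$; equivalently, the terminal end of $\cell[\tau||\omega]$ carries $\omega$ and the initial end carries $\tau\omega\bar\tau$. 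This is exactly the content of the relation $\cell[\bar\tau||\omega]=-\cell[\tau||\bar\tau\omega\tau]$ recorded before the statement.

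For part (1), the $\Delta$-direction contributes $(-1)^{\dim\tau}\cell[\tau||\partial\omega]=-\cell[\tau||\partial\omega]$, since $\dim\tau=1$, where $\cell[\tau||\partial\omega]=\sum_{i}(-1)^{i}\cell[\tau||\partial_{i}\omega]$ is the simplicial boundary already spelled out. The cell-direction contributes the two endpoints of $\tau$: both lie over $*=\tau(0)=\tau(1)$, but the transport law makes the terminal end carry $\omega$ and the initial end carry $\tau\omega\bar\tau$, and with the opposite orientations of the two endpoints this yields $\cell[*||\omega]-\cell[*||\tau\omega\bar\tau]$. Adding the two directions gives $\partial\cell[\tau||\omega]=\cell[*||\omega]-\cell[*||\tau\omega\bar\tau]-\cell[\tau||\partial\omega]$, which is (1); reduction modulo $2$ is then harmless.

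For part (2) the same scheme applies, now with $\partial\sigma$ the $1$-chain traced out by the attaching loop of the Klein-bottle $2$-cell. The $\Delta$-direction gives $(-1)^{\dim\sigma}\cell[\sigma||\partial\omega]=+\cell[\sigma||\partial\omega]$. For the cell-direction I would traverse the boundary square of $\sigma$ reading its four edges in order, applying the transport law at each corner to propagate the coordinate $\omega$ (based at the corner where the orientation of $\sigma$ originates) around the square. This produces the four faces with their conjugating words and orientation signs: the two $a$-edges give $+\cell[a||\omega]$ and $+\cell[a||ba\omega\bar{a}\bar{b}]$, while the two $b$-edges give $-\cell[b||\omega]$ and $+\cell[b||a\omega\bar{a}]$, the opposite signs on the $b$-edges reflecting that one of them is traversed against its orientation. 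Together with the $\Delta$-term this is (2).

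The main obstacle is the bookkeeping in the cell-direction for $\sigma$: one must correctly pair, for each of the four edges, its orientation sign in the attaching word with the precise conjugating element obtained by transporting $\omega$ along the preceding portion of the boundary loop. In particular, transporting naively along the left $a$-edge yields the coordinate conjugated by $\bar{a}b$, and one must invoke the defining relation $aba=b$ (equivalently $\bar{a}b=ba$) to rewrite this as conjugation by $ba$, matching $\cell[a||ba\omega\bar{a}\bar{b}]$. As a consistency check I would confirm that forgetting $\omega$ collapses the formula to $\partial\sigma=a+a-b+b=2a$, in agreement with the cellular boundary of $K$ recorded above, which vanishes modulo $2$.
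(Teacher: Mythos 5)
Your proposal is correct and takes essentially the same route as the paper, which offers no separate proof but justifies the proposition by exactly the ingredients you use: the product-cell decomposition of $P_{\!\!B}^{m}\widehat{K}=\widetilde{K}\times_{\ad}P^{m}\pi$, the transport law $\tilde\tau(t)=[\tau_{t}\cdot\lambda\cdot\tau_{t}^{-1}]$ placing $\omega$ at the terminal end and $\tau\omega\bar\tau$ at the initial end, and the labelled boundary square of $\sigma$ whose four edges carry precisely $\cell[a||\omega]$, $\cell[a||ba\omega\bar{a}\bar{b}]$, $\cell[b||\omega]$, $\cell[b||a\omega\bar{a}]$. Your sign bookkeeping, the use of $\bar{a}b=ba$ on the left edge, and the sanity check $\partial\sigma=2a$ all agree with the paper's figure and conventions.
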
\vspace{-2ex}

\section{Topological Complexity of non-orientable surface}
\vspace{-.5ex}
Since $P^{\infty}\pi \simeq K$, we have $H^{*}(P^{\infty}\pi) =\field_{2}\{1,x,y,z\}$ with $z\!=\!xy\!=\!yx\!=\!x^{2}$, where $x, \,y$ are dual to $[a], \,[b]$, respectively, the generators of $H_{1}(P^{\infty}\pi) \cong \field_{2}[a]\oplus\field_{2}[b]$.
We regard $x$ and $y$ are in $Z^{1}(P^{\infty}\pi)$ and $z\!=\!x \!\cup\! y$ is in $Z^{2}(P^{\infty}\pi)$.
A simple computation shows that $[a^{k}b^{\ell}]$ is homologous to $k[a]\!+\!\ell[b]$ in $Z_{1}(P^{\infty}\pi)$, and we have $x[a^{k}b^{\ell}]=k$ and $y[a^{k}b^{\ell}]=\ell$.
By definition of a cup product in a chain complex, we obtain the following equality:
\par\vskip.5ex\noindent\hfil$
z[a^{k_{1}}b^{\ell_{1}}|a^{k_{2}}b^{\ell_{2}}]
=(x \!\cup\! y)[a^{k_{1}}b^{\ell_{1}}|a^{k_{2}}b^{\ell_{2}}]
=x[a^{k_{1}}b^{\ell_{1}}]{\cdot}y[a^{k_{2}}b^{\ell_{2}}]
=k_{1}\ell_{2}\quad \text{in $P^{m}\pi$}, 
$\hfil\par\vskip.5ex\noindent
where we denote $x|_{P^{m}\pi}$, $y|_{P^{m}\pi}$ and $z|_{P^{m}\pi}$ again by $x$, $y$ and $z$, respectively.\vspace{-1ex}
\begin{prop}\label{prop:iso}
\begin{enumerate}
\item\label{prop:iso-1}
$e^{K}_{m} : P^{m}\pi \!\hookrightarrow\! P^{\infty}\pi \!\overset{\simeq}\to\! K$ induces, up to dimension $2$ in the ordinary $\field_{2}$-cohomology, a monomorphism if $m \!\geq\! 2$,\vspace{.5ex} and an isomorphism if $m \!\geq\! 3$.
\item\label{prop:iso-2}
$e^{E}_{m} : P^{m}_{\!\!B}\widehat{K} \!\hookrightarrow\! P_{\!\!B\,}^{\infty}\widehat{K} \overset{\simeq}\to E$ induces, up to dimension $4$ in the ordinary $\field_{2}$-cohomology, a monomorphism if $m \!\geq\! 4$, and an isomorphism if $m \!\geq\! 5$.
\end{enumerate}
\end{prop}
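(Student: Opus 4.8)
The plan is to treat both parts uniformly. Up to the equivalences $P^{\infty}\pi\simeq K$ and $P^{\infty}_{\!\!B}\widehat{K}\xrightarrow{\simeq}E$, the map $e_{m}$ is simply the inclusion of the $m$-th bar filtration into the full bar construction, and its effect on $\field_{2}$-cohomology in low degrees will be extracted from a short exact sequence of cochain complexes. First I would record that the cellular chain complex of $P^{m}\pi$ is exactly the bar resolution of $\pi$ truncated at bar length $m$, so that $e^{K}_{m}:P^{m}\pi\hookrightarrow P^{\infty}\pi$ is the inclusion of the subcomplex spanned by the cells $[\omega]$ with $\omega\in\pi^{n}$, $n\le m$. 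For the fibrewise case I would use the explicit chain-group decomposition of $C_{*}(P^{m}_{\!\!B}\widehat{K})$ together with the boundary formulas of Proposition~\ref{prop:boundary-formula} to check that applying $\partial$ never raises the bar length: every cell occurring in $\partial\cell[\tau||\omega]$ or in $\partial\cell[\sigma||\omega]$ has bar length $\le n$. Hence the cells $\cell[\eta||\omega]$ with $\mathrm{length}(\omega)\le m$ form a subcomplex, and $e^{E}_{m}:P^{m}_{\!\!B}\widehat{K}\hookrightarrow P^{\infty}_{\!\!B}\widehat{K}$ is again a cellular inclusion of a subcomplex.

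Next I would dualize. Since both inclusions are inclusions of subcomplexes spanned by a subset of the cells, the restriction $\iota^{*}:C^{*}(P^{\infty})\to C^{*}(P^{m})$ is surjective, with kernel $D^{*}$ the cochains supported on cells of bar length $>m$. The decisive bookkeeping is the lowest dimension in which such a cell appears. In the unfibred case a $d$-cell $[\omega]$ has bar length $d$, so $D^{d}=0$ precisely for $d\le m$. In the fibrewise case a $d$-cell $\cell[\eta||\omega]$ with $\eta\in\Lambda_{k}$, $k\in\{0,1,2\}$, carries bar length $d-k$, so the smallest bar length occurring in dimension $d$ is $d-2$, realised by the cells $\cell[\sigma||\omega]$; consequently $D^{d}=0$ for $d\le m$ while $D^{m+1}\ne 0$. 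This shift by $2$, forced by the single $2$-cell $\sigma$ of $K$, is exactly what promotes the thresholds $(2,3)$ of part~(1) to the thresholds $(4,5)$ of part~(2).

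Then I would feed $0\to D^{*}\to C^{*}(P^{\infty})\to C^{*}(P^{m})\to 0$ into its long exact cohomology sequence. Because $D^{d}=0$ for $d\le m$ in both cases, one gets $H^{d}(D)=0$ for $d\le m$, whence $(e_{m})^{*}=\iota^{*}$ is an isomorphism for $d\le m-1$ and a monomorphism for $d=m$. Specialising: for part~(1) the space $K$ has $\field_{2}$-cohomological dimension $2$, so $m\ge 2$ gives a monomorphism through dimension $2$ (iso for $d\le 1$, mono at $d=2=m$) and $m\ge 3$ gives an isomorphism through dimension $2$; for part~(2) the space $E=K{\times}K$ has $\field_{2}$-cohomological dimension $4$, so $m\ge 4$ gives a monomorphism through dimension $4$ and $m\ge 5$ an isomorphism through dimension $4$. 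As a consistency check on the non-trivial top degrees I would confirm, using the displayed cup-product computation $z[a|b]=1$, that the degree-$2$ class $z=x\cup y$ restricts non-trivially, and that the corresponding generator of $H^{*}(E;\field_{2})\cong H^{*}(K;\field_{2})^{\otimes 2}$ in degree $4$ is detected in the same way.

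The genuinely delicate step is the second one: the whole statement rests on the precise dictionary between the cellular dimension of $\cell[\eta||\omega]$ and the bar length of $\omega$, and on the verification through Proposition~\ref{prop:boundary-formula} that the boundary operator respects the bar-length filtration. Once that bookkeeping is pinned down, the remaining input is the standard truncation argument and is routine.
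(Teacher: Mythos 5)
Your argument is correct and is essentially the cochain-level unpacking of the paper's one-line proof: the paper simply notes that $P^{m}\pi$ is the $m$-skeleton of $P^{\infty}\pi$, so the pair $(P^{\infty}\pi,P^{m}\pi)$ --- and hence the fibrewise pair $(P_{\!\!B\,}^{\infty}\widehat{K},P_{\!\!B\,}^{m}\widehat{K})$ --- is $m$-connected, which is the same vanishing of relative cohomology in degrees $\le m$ that you extract from $D^{d}=0$ for $d\le m$. One expository quibble: the passage from thresholds $(2,3)$ to $(4,5)$ is driven solely by the target dimension rising from $2$ to $4$ (as your final specialisation correctly uses), not by the ``shift by $2$'' in bar length, since in both cases $D^{d}=0$ exactly for $d\le m$ is governed by the cells of maximal bar length $d$ (those with $\eta=*$), just as in part (1).
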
\vspace{-1ex}
\vspace{-1.5ex}\begin{proof}
Since $P^{m}\pi$ is the $m$-skeleton of $P^{\infty}\pi$, the pair $(P^{\infty}\pi,P^{m}\pi)$ is $m$-connected, and so is the fibrewise pair $(P_{\!\!B\,}^{\infty}\widehat{K},P_{\!\!B\,}^{m}\widehat{K})$ over $K$.
It implies the proposition.
\end{proof}\vspace{-1ex}

By Proposition \ref{prop:iso} (\ref{prop:iso-1}), we can easily see the following propostion.\vspace{-1ex}
\begin{prop}\label{prop:top-dimension}
The cocycle $z$ represents the generator of $H^{2}(P^{m}\pi) \cong \field_{2}$\, for \,$m \geq 3$.
\end{prop}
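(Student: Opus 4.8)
The plan is to obtain this as an immediate consequence of Proposition \ref{prop:iso} (\ref{prop:iso-1}). Recall that $z = x \cup y = x^{2}$ is the non-zero class generating $H^{2}(P^{\infty}\pi) \cong H^{2}(K) \cong \field_{2}$, the top-dimensional $\field_{2}$-cohomology of the Klein bottle. Since $e^{K}_{m} : P^{m}\pi \hookrightarrow P^{\infty}\pi \overset{\simeq}{\to} K$ is, for $m \geq 3$, an isomorphism on $\field_{2}$-cohomology up to dimension $2$, restriction along the skeletal inclusion must carry this generator to a generator of $H^{2}(P^{m}\pi)$.

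Concretely, the steps I would carry out are as follows. First I would invoke the isomorphism range of Proposition \ref{prop:iso} (\ref{prop:iso-1}) to conclude that $H^{2}(P^{m}\pi) \cong \field_{2}$ for $m \geq 3$, with the isomorphism induced by the inclusion $P^{m}\pi \hookrightarrow P^{\infty}\pi$. Next I would note that the class denoted $z$ in $H^{2}(P^{m}\pi)$ is, by our convention, nothing but the restriction $z|_{P^{m}\pi}$ of the generator of $H^{2}(P^{\infty}\pi)$. Finally I would conclude that the image of a generator under an isomorphism is again a generator, which is exactly the assertion.

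I do not expect any genuine obstacle here, since the content is carried entirely by Proposition \ref{prop:iso}; the only point worth keeping straight is that passing to the $m$-skeleton leaves degree-$2$ cohomology untouched precisely in the range $m \geq 3$, which is the isomorphism threshold recorded there (for $m = 2$ one has only a monomorphism, so the argument would not apply). As a sanity check that bypasses the skeletal comparison altogether, I would evaluate $z$ directly on a $2$-cell using the cup-product formula established above: taking $a = a^{1}b^{0}$ and $b = a^{0}b^{1}$, so that $(k_{1},\ell_{1}) = (1,0)$ and $(k_{2},\ell_{2}) = (0,1)$, one gets $z[a|b] = k_{1}\ell_{2} = 1$, whence $z|_{P^{m}\pi} \neq 0$. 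Combined with $H^{2}(P^{m}\pi) \cong \field_{2}$ this forces $z$ to be the generator, confirming the conclusion independently.
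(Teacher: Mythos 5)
Your main argument is precisely the paper's proof: the paper derives this proposition in one line from Proposition \ref{prop:iso} (\ref{prop:iso-1}), exactly as you do, by restricting the generator $z$ of $H^{2}(P^{\infty}\pi)\cong H^{2}(K;\field_{2})$ along the inclusion $P^{m}\pi\hookrightarrow P^{\infty}\pi$, which is a cohomology isomorphism in degree $2$ for $m\geq 3$. One caveat: your ``independent sanity check'' only shows that the cochain $z$ is nonzero (e.g.\ $z[a|b]=1$), not that its cohomology class is nonzero --- a nonzero cocycle could still be a coboundary --- so it does not actually bypass the skeletal comparison, but since it is offered only as a supplementary check the proof stands on the main argument.
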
\vspace{-1ex}

Associated with the filtration $\{F_{i}(m)\!=\!p_{m}^{-1}(K^{(i)})\}$ of $P_{\!\!B\,}^{m}\widehat{K} \simeq_{B} P_{\!\!B\,}^{m}\Omega_{B}E$, given by the CW filtration $\{*\}\!=\!K^{(0)}\!\subset\!K^{(1)}\!\subset\!K^{(2)}\!=\!K$ of $K$ with $K^{(1)}\!= \{*\} \cup e^{1}_{(a)} \cup e^{1}_{(b)} \homeo S^{1} \!\vee\! S^{1}$, we have Serre spectral sequence $E_{r}^{*,*}(m)=E_{r}^{*,*}(P_{\!\!B\,}^{m}\widehat{K})$ converging to $H^{*}(P_{\!\!B\,}^{m}\widehat{K})$ with $E_{1}^{p,q}(m) \cong H^{p+q}(F_{p}(m),F_{p-1}(m)) \cong H^{p}(K^{(p)},K^{(p-1)};H^{q}(P^{m}\pi))$ the cohomology with local coefficients.

From now on, we denote $\alpha\!=\!(a^{k_{1}}b^{\ell_{1}})$, \,$\tau\!=\!(a^{k_{1}}b^{\ell_{1}},a^{k_{2}}b^{\ell_{2}})$ and $\omega\!=\!(a^{k_{1}}b^{\ell_{1}},a^{k_{2}}b^{\ell_{2}},a^{k_{3}}b^{\ell_{3}})$.
Let functions $: [a^{k_{1}}b^{\ell_{1}}|\cdots|a^{k_{n}}b^{\ell_{n}}] \mapsto k_{i}$ and $\ell_{i}$ by $(k_{i})$ and $(\ell_{i})$, respectively for $1 \!\leq\! i \!\leq\! n$.
Then for a function $f : \integral^{2n} \!\to\! \integral$, we obtain a function $(f(\{k_{i}\},\{\ell_{i}\})) : [a^{k_{1}}b^{\ell_{1}}|\cdots|a^{k_{n}}b^{\ell_{n}}] \mapsto f(\{k_{i}\},\{\ell_{i}\})$.
By Proposition \ref{prop:iso}, $H^{4}(P_{\!\!B\,}^{5}\widehat{K}) \!\cong\! \field_{2}$ is generated by $(e^{E}_{5})^{*}([z{\otimes}z])$, which comes from $E_{1}^{2,2}(5) \cong H^{4}(F_{2}(5),F_{1}(5))$ for dimensional reasons.
By the isomorphism $H^{4}(F_{2}(5),F_{1}(5)) \!\cong\! H^{2}(P^{5}\pi)$,  $(e^{E}_{5})^{*}[z{\otimes}z]$ corresponds to  $[z] \!\in\! H^{2}(P^{5}\pi)$ by Proposition \ref{prop:top-dimension}, and hence a representing cocycle $w \!\in\! Z^{4}(P_{\!\!B\,}^{5}\widehat{K})$ of $(e^{E}_{5})^{*}[z{\otimes}z]$ can be chosen as a homomorphism defined by the formulae
\par\vskip.5ex\noindent
\hfil$
w{\cell[\sigma||\tau]} = z[\tau] = k_{1}\ell_{2},
\quad w\vert_{F_{1}(5)}=0.
$\hfil
\par\vskip.5ex\noindent
When $3\!\leq\!m\!\leq\!5$, we denote $w\vert_{P_{\!\!B\,}^{m}\widehat{K}}$ again by $w \!\in\! Z^{4}(F_{2}(m),F_{1}(m))$, which is representing a generator of $E_{1}^{2,2}(m)$.
Furthermore, $[w]\!\not=\!0$ in $E_{\infty}^{2,2}(m)$ if $m \!\geq\! 4$ by Proposition \ref{prop:iso}.

Our main goal is to show $[w]\!=\!0$ in $H^{*}(P_{\!\!B\,}^{3}\widehat{K})$:
we remark here that $\varepsilon(\ell) \underset{(2)}= \ell$ for $\ell\!\in\!\integral$, since \,$\varepsilon(\ell) \!=\! 0$ \!${\iff}$\! $(-1)^{\ell}\!=\!1$ \!${\iff}$\! $\ell$\, is even.

Firstly, let us introduce a numerical function given by the floor function.\vspace{-1ex}

\begin{defn}
$t(m)=\lfloor{\frac{m}{2}}\rfloor
$ for \,$m \in \integral$.
\end{defn}\vspace{-1ex}
Then we have $t(0)\!=\!0$ and we obtain the following.
\vspace{-1ex}
\begin{prop}\label{app-prop:numerical-functions}
\begin{enumerate}
\item\label{numericalfunction2}
$t(-m) \underset{(2)} = t(m)\!+\!m$, 
\item\label{numericalfunction3b}
$t(m{+}n{+}2\ell) \underset{(2)}= t(m){+}t(n){+}mn{+}\ell$,\vspace{-1ex} \ for \,$m, n, \ell \!\in\! \integral$.
\end{enumerate}
\end{prop}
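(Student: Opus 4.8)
The plan is to prove both congruences by elementary manipulation of the floor function, reducing everything to parity bookkeeping modulo $2$; no spectral-sequence or topological input is needed.

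For part (\ref{numericalfunction2}), I would first establish the \emph{exact} integer identity $t(-m)=t(m)-m$. This is immediate from the fact that $m$ is an integer, so it may be pulled inside the floor: $\lfloor\frac{m}{2}\rfloor-m=\lfloor\frac{m}{2}-m\rfloor=\lfloor\frac{-m}{2}\rfloor=t(-m)$. Reducing modulo $2$ and using $-m\underset{(2)}{=}m$ then yields $t(-m)\underset{(2)}{=}t(m)+m$, which is exactly the claim.

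For part (\ref{numericalfunction3b}), I would split off the even contribution at the outset. Since $2\ell$ is even, $t(m{+}n{+}2\ell)=\lfloor\frac{m+n}{2}\rfloor+\ell=t(m{+}n)+\ell$ as an exact equation of integers, so it suffices to prove $t(m{+}n)\underset{(2)}{=}t(m)+t(n)+mn$. To see this I would write each integer as $m=2t(m)+\varepsilon(m)$ and $n=2t(n)+\varepsilon(n)$, with $\varepsilon$ the parity function introduced above, so that $m+n=2(t(m)+t(n))+(\varepsilon(m)+\varepsilon(n))$. Because $\varepsilon(m)+\varepsilon(n)\in\{0,1,2\}$, taking $\lfloor\tfrac{\cdot}{2}\rfloor$ contributes an extra carry of exactly $\varepsilon(m)\varepsilon(n)$, giving $t(m{+}n)=t(m)+t(n)+\varepsilon(m)\varepsilon(n)$. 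Finally $\varepsilon(m)\varepsilon(n)\underset{(2)}{=}mn$, since $\varepsilon(m)\underset{(2)}{=}m$ and $\varepsilon(n)\underset{(2)}{=}n$, which completes the reduction and, after adding back $\ell$, the proof.

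I do not expect a serious obstacle here: the only delicate point is the carry term in $t(m{+}n)$, which vanishes unless both $m$ and $n$ are odd and otherwise equals $1$ — precisely the product $\varepsilon(m)\varepsilon(n)$, and this is what produces the cross term $mn$ modulo $2$. As a sanity check, and as an alternative to the substitution above, one can verify $t(m{+}n)\underset{(2)}{=}t(m)+t(n)+mn$ by a four-way case analysis on the parities of $m$ and $n$: the both-odd case is the unique one in which $t(m{+}n)$ and $t(m)+t(n)$ differ by $1$, and there $mn$ is odd, so the two sides agree modulo $2$ in every case.
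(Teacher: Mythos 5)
Your proposal is correct, and it matches the paper's intent exactly: the paper offers no actual argument for this proposition, stating only that it "can be obtained by strait-forward calculations" and leaving it to the reader. Your write-up supplies precisely such a calculation — the exact identities $t(-m)=t(m)-m$ and $t(m{+}n{+}2\ell)=t(m)+t(n)+\varepsilon(m)\varepsilon(n)+\ell$, with the carry term $\varepsilon(m)\varepsilon(n)$ accounting for the cross term $mn$ modulo $2$ — and all steps (including the validity of $m=2t(m)+\varepsilon(m)$ for negative $m$) check out.
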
\begin{proof}
This proposition can be obtained by strait-forward calculations, and so we left it to the reader.
\end{proof}\vspace{-1ex}

\vspace{-1ex}
\begin{cor}\label{app-prop:numerical-functions-coboundary}
\begin{enumerate}
\item $t(k_{1})[\partial\tau] 
\!=\! t(k_{2}) 
\!+\! t(k_{1}{+}k_{2}{+}2\varepsilon(\ell_{1})k_{2})
\!+\! t(k_{1}) 
\!=\! (\ell_{1}{+}k_{1})k_{2}$,
\par\item
$(k_{1}t(k_{2}))[\partial\omega] 
\!=\! k_{2}t(k_{3}) 
\!+\! (k_{1}{+}k_{2})t(k_{3}) 
\!+\! k_{1}t(k_{2}{+}k_{3}{+}2\varepsilon(\ell_{2})k_{3})
\!+\! k_{1}t(k_{2}) 
\!=\! k_{1}(\ell_{2}{+}k_{2})k_{3}$.
\end{enumerate}
\end{cor}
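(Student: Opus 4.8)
The plan is to evaluate each cochain directly on the simplicial boundary and then reduce modulo $2$ by means of Proposition~\ref{app-prop:numerical-functions}(\ref{numericalfunction3b}). For (1), I would first expand $\partial\tau = \partial_{0}\tau - \partial_{1}\tau + \partial_{2}\tau$. The three faces are the $1$-cells $[a^{k_{2}}b^{\ell_{2}}]$, the merged face $[a^{k_{1}}b^{\ell_{1}}{\cdot}a^{k_{2}}b^{\ell_{2}}]$, and $[a^{k_{1}}b^{\ell_{1}}]$; the middle one is put into normal form via the multiplication rule $a^{k_{1}}b^{\ell_{1}}a^{k_{2}}b^{\ell_{2}}=a^{k_{1}+k_{2}+2\varepsilon(\ell_{1})k_{2}}b^{\ell_{1}+\ell_{2}}$ established earlier. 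Evaluating the $1$-cochain $t(k_{1})$ (which sends a $1$-cell $[a^{p}b^{q}]$ to $t(p)$) on each face, and noting that the signs are harmless since we work in $\field_{2}$, recovers the displayed intermediate sum $t(k_{2})+t(k_{1}{+}k_{2}{+}2\varepsilon(\ell_{1})k_{2})+t(k_{1})$.

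The decisive step is then to rewrite the merged term. Applying Proposition~\ref{app-prop:numerical-functions}(\ref{numericalfunction3b}) with $m=k_{1}$, $n=k_{2}$ and $\ell=\varepsilon(\ell_{1})k_{2}$ gives $t(k_{1}{+}k_{2}{+}2\varepsilon(\ell_{1})k_{2}) \underset{(2)}{=} t(k_{1}){+}t(k_{2}){+}k_{1}k_{2}{+}\varepsilon(\ell_{1})k_{2}$. Substituting this into the intermediate sum, the pairs $t(k_{1}){+}t(k_{1})$ and $t(k_{2}){+}t(k_{2})$ cancel modulo $2$, leaving $k_{1}k_{2}+\varepsilon(\ell_{1})k_{2}$; finally the remark $\varepsilon(\ell)\underset{(2)}{=}\ell$ turns this into $(\ell_{1}{+}k_{1})k_{2}$, as claimed. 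Part (2) is entirely parallel: expand $\partial\omega=\sum_{i=0}^{3}(-1)^{i}\partial_{i}\omega$, note that the $2$-cochain $k_{1}t(k_{2})$ sends a $2$-cell $[a^{p_{1}}b^{q_{1}}|a^{p_{2}}b^{q_{2}}]$ to $p_{1}t(p_{2})$, and apply Proposition~\ref{app-prop:numerical-functions}(\ref{numericalfunction3b}) this time to the face $t(k_{2}{+}k_{3}{+}2\varepsilon(\ell_{2})k_{3})$ with $m=k_{2}$, $n=k_{3}$, $\ell=\varepsilon(\ell_{2})k_{3}$; the same mod-$2$ cancellations collapse all $t$-terms and leave $k_{1}k_{2}k_{3}+k_{1}\varepsilon(\ell_{2})k_{3}=k_{1}(\ell_{2}{+}k_{2})k_{3}$.

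There is no deep obstacle here; the calculation is essentially forced once the right bookkeeping is set up. The only point demanding care is the combinatorics of the face operators $\partial_{i}$ together with the requirement that each group product appearing in a face be rewritten in the normal form $a^{\bullet}b^{\bullet}$ \emph{before} the integer-valued cochain is applied, since it is the exponent of $a$ that is read off. Once this is done correctly, the identity reduces to a single application of the quadratic reciprocity-type formula of Proposition~\ref{app-prop:numerical-functions}(\ref{numericalfunction3b}) and the elementary congruence $\varepsilon(\ell)\underset{(2)}{=}\ell$, so the remaining manipulations are purely routine cancellations in $\field_{2}$.
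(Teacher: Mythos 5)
Your computation is correct and is exactly the verification the paper intends: the paper states this as an immediate corollary of Proposition~\ref{app-prop:numerical-functions} with no written proof, and your expansion of the face maps, normalization of the group products via $a^{k_{1}}b^{\ell_{1}}a^{k_{2}}b^{\ell_{2}}=a^{k_{1}+k_{2}+2\varepsilon(\ell_{1})k_{2}}b^{\ell_{1}+\ell_{2}}$, application of Proposition~\ref{app-prop:numerical-functions}(\ref{numericalfunction3b}), and the congruence $\varepsilon(\ell)\underset{(2)}{=}\ell$ reproduce both displayed identities. Nothing further is needed.
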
\vspace{-1ex}

Secondly, we introduce an element $u \!\in\! C^{3}(P_{\!\!B\,}^{3}\widehat{K})$ given by the formulae below:
\par\vskip1.5ex\noindent
\hfil$\begin{array}{l}
u\cell[*||\omega]=k_{1}t(k_{2})\ell_{3}k_{3} + k_{1}(\ell_{2}k_{3}{+}k_{2}\ell_{3}{+}k_{2})t(k_{3}),
\\[2ex]
u\cell[a||\tau] = 0, \ \ 
u\cell[b||\tau] = (k_{1}t(k_{2}))[\tau] 
\ \ \text{and} \ \ 
u\cell[\sigma||\alpha] = 0.
\end{array}$\hfil
\par\vskip1.5ex\noindent
Then $\delta{u}$ enjoys the following formulae by Propositions \ref{prop:boundary-formula}, \ref{app-prop:numerical-functions} and Corollary \ref{app-prop:numerical-functions-coboundary} in $C^{*}(P_{\!\!B\,}^{3}\widehat{K})$: 
\begin{enumerate}
\item
$\begin{array}[t]{l}
(\delta{u})\cell[\sigma||\tau]
\underset{(2)}= 
u\cell[a||\tau]
+ u\cell[a||a^{-k_{1}-2\varepsilon(\ell_{1})}b^{\ell_{1}}|a^{-k_{2}-2\varepsilon(\ell_{2})}b^{\ell_{2}}]
\\[0ex]\hskip25mm\quad%
+\, u\cell[b||\tau] 
\,+\, u\cell[b||a^{k_{1}+2\varepsilon(\ell_{1})}b^{\ell_{1}}|a^{k_{2}+2\varepsilon(\ell_{2})}b^{\ell_{2}}] 
+ u\cell[\sigma||\partial{\tau}]
\\[1ex]\hskip0mm\quad%
\underset{(2)}= 
0 + k_{1}(t(k_{2}) {+} t(k_{2}{+}2\varepsilon(\ell_{2})))
+ 0 \underset{(2)}= 
k_{1}\varepsilon(\ell_{2}) \underset{(2)}= k_{1}\ell_{2} = w\cell[\sigma||\tau].
\end{array}$
\par\vskip1ex
\item
$\begin{array}[t]{l}\textstyle 
(\delta{u})\cell[a||\omega]
\underset{(2)}=
u\cell[*||\omega]$ $+$ $u\cell[*||a^{k_{1}+2\varepsilon(\ell_{1})}b^{\ell_{1}}|a^{k_{2}+2\varepsilon(\ell_{2})}b^{\ell_{2}}|a^{k_{3}+2\varepsilon(\ell_{3})}b^{\ell_{3}}] + u\cell[a||\partial\omega]
\\\quad\textstyle
\underset{(2)}=
k_{1}(t(k_{2}){+}t(k_{2}{+}2\varepsilon(\ell_{2})))\ell_{3}k_{3}
+ 
k_{1}(\ell_{2}k_{3}{+}k_{2}\ell_{3}{+}k_{2})(t(k_{3}) {+} t(k_{3}{+}2\varepsilon(\ell_{3})))
+ 
0
\\[1ex]\quad\textstyle 
\underset{(2)}= 
k_{1}\varepsilon(\ell_{2})\ell_{3}k_{3}
+ 
k_{1}(\ell_{2}k_{3}{+}k_{2}\ell_{3}{+}k_{2})\varepsilon(\ell_{3})
\underset{(2)}= 0 = w\cell[a||\omega].
\end{array}$
\par\vskip1ex
\item
$\begin{array}[t]{l}\textstyle 
(\delta{u})\cell[b||\omega] 
\underset{(2)}=
u\cell[*||\omega] + u\cell[*||a^{-k_{1}}b^{\ell_{1}}|a^{-k_{2}}b^{\ell_{2}}|a^{-k_{3}}b^{\ell_{3}}] + u\cell[b||\partial\omega]
\\\quad\textstyle
\underset{(2)}=
k_{1}(t(k_{2}) {+} t(-k_{2}))\ell_{3}k_{3}
+ 
k_{1}(\ell_{2}k_{3}{+}k_{2}\ell_{3}{+}k_{2})(t(k_{3}){+}t(-k_{3})) + (k_{1}t(k_{2}))[\partial\omega]
\\[1ex]\quad\textstyle 
\underset{(2)}=
k_{1}k_{2}\ell_{3}k_{3}
+ 
k_{1}(\ell_{2}k_{3}{+}k_{2}\ell_{3}{+}k_{2})k_{3} + k_{1}(\ell_{2}{+}k_{2})k_{3}
\underset{(2)}= 0 = w\cell[b||\omega].
\end{array}$
\end{enumerate}
Thus we obtain that $\delta{u} \!\underset{(2)}=\! w$ in $C^{*}(P_{\!\!B\,}^{3}\widehat{K})$, which enables us to show the following.\vspace{-1.5ex}
\begin{thm}\label{thm:main}
$\tcm{K} = \tc{K} = \wgtBb{E} = \wgtBb{z{\otimes}z} = 4$.
\end{thm}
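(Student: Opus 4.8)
The plan is to force all the quantities in the statement down to $4$ by a single squeeze. Theorem \ref{thm:weight-tc} already provides the chain $\wgtBb{E}\le\MwgtBb{E}\le\tc{K}\le\tcm{K}$, so it is enough to establish one lower bound, $\wgtBb{E}\ge 4$, and one upper bound, $\tcm{K}\le 4$; the equality $\wgtBb{z{\otimes}z}=4$ will emerge along the way as a by-product of the lower bound.

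First I would compute the weight of the single class $z{\otimes}z$ by reading off the cochain-level work already assembled. The established relation $\delta u \underset{(2)}{=} w$ in $C^{*}(P_{\!\!B}^{3}\widehat{K})$ presents $w$ as a coboundary, so $[w]=0$ in $H^{4}(P_{\!\!B}^{3}\widehat{K})$; as $w$ represents $(e^{E}_{3})^{*}[z{\otimes}z]$, this says precisely that $(e^{E}_{3})^{*}[z{\otimes}z]=0$. On the other hand $[w]\neq 0$ in $E_{\infty}^{2,2}(m)$ for $m\ge 4$, whence $(e^{E}_{4})^{*}[z{\otimes}z]\neq 0$. Since $z{\otimes}z\neq 0$ in $H^{4}(E)=H^{4}(K{\times}K)$ by K\"unneth, these two facts pin down $\wgtBb{z{\otimes}z}=4$. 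In particular $(e^{E}_{3})^{*}$ kills the nonzero class $z{\otimes}z$ and so is not a monomorphism; because the set of $m$ for which $e^{E}_{m}$ is monomorphic is upward closed, this already yields $\wgtBb{E}\ge 4$.

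For the upper bound I would appeal to the dimensional estimate $\TCM{X}\le 2\dim X+1$; with $\dim K=2$ it gives $\TCM{K}\le 5$, that is $\tcm{K}\le 4$. This route is chosen precisely so as not to rely on the comparison of $\tcm{}$ with $\tc{}$ flagged as open in the footnote to Theorem \ref{thm:weight-catb}. Feeding both bounds into the chain, $4\le\wgtBb{E}\le\MwgtBb{E}\le\tc{K}\le\tcm{K}\le 4$, so every term equals $4$; together with $\wgtBb{z{\otimes}z}=4$ this is the assertion.

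I expect the genuine difficulty to lie wholly in the lower bound, namely in the construction of the $3$-cochain $u$ and the verification of $\delta u \underset{(2)}{=} w$. This is the only non-formal, Klein-bottle-specific step: the conjugation rule $b^{\pm1}(a^{k}b^{\ell})b^{\mp1}=a^{-k}b^{\ell}$ feeds into the boundary formulae of Proposition \ref{prop:boundary-formula}, and the mod-$2$ cancellations hinge on the parity identities of Proposition \ref{app-prop:numerical-functions} and Corollary \ref{app-prop:numerical-functions-coboundary}. It is exactly the vanishing $(e^{E}_{3})^{*}[z{\otimes}z]=0$ that escapes cruder invariants---the fibrewise cup-length $\cuplenBb{E}$ cannot reach $4$---so the weight of $z{\otimes}z$ is the single ingredient that lifts the lower bound from $3$ to $4$.
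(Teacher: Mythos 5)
Your proposal is correct and follows essentially the same route as the paper: the lower bound $\wgtBb{z{\otimes}z}\ge 4$ comes from the cochain identity $\delta u \underset{(2)}{=} w$ forcing $(e^{E}_{3})^{*}(z{\otimes}z)=[w]=0$, and the upper bound is the dimensional estimate $\tcm{K}\le 2\cat{K}\le 2\dim K=4$ fed into the chain of Theorem \ref{thm:weight-tc}. The only cosmetic difference is that you quote the bound as $\TCM{X}\le 2\dim X+1$ directly rather than factoring it through $2\cat{K}$ as the paper does.
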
\vspace{-2ex}
\vspace{-1ex}\begin{proof}
By the above arguments, we have $(e^{E}_{3})^{*}(z{\otimes}z)=[w]=[\delta{u}]=0$ in $H^{*}(P_{\!\!B\,}^{3}\widehat{K})$, and hence 
$0\!\not=\!z{\otimes}z \in \ker\,(e^{E}_{3})^{*}$ which implies $\wgtBb{E}\geq\wgtBb{z{\otimes}z}\geq4$.
On the other hand, Theorem \ref{thm:weight-tc} implies $\mwgtBb{E}\leq\tc{K}\leq\tcm{K}\leq2\cat{K}\leq2\dim{K}\!=\!4$.
It implies the theorem.
\end{proof}\vspace{-1ex}

\begin{rem}
Let $u_{0} \!\in\! C^{2}(P_{\!\!B\,}^{2}\widehat{K})$ and $w_{0} \in C^{3}(P_{\!\!B\,}^{2}\widehat{K})$ be as follows:
\par\vskip.5ex\noindent
\hfil$
\begin{array}[t]{l}
u_{0}\cell[*||\tau]=(t(k_{1})\ell_{2}k_{2} + (\ell_{1}k_{2}{+}k_{1}\ell_{2}{+}k_{1})t(k_{2}))[\tau], \quad
u_{0}\cell[a||\alpha]=0, \ \ 
\\[1ex]
u_{0}\cell[b||\alpha]=t(k_{1})[\alpha], \quad
u_{0}\cell[\sigma||*]=0; \qquad 
w_{0}{\cell[\sigma||\alpha]} = y[\alpha] = \ell_{1}, \ \ w_{0}\vert_{F_{1}(2)}=0.
\end{array}
$\hfil
\par\vskip1.0ex\noindent
Then we can observe $\delta(u_{0})\!\underset{(2)}=\!w_{0}$ and $[w_{0}]\!=\!0$ in $H^{*}(P_{\!\!B\,}^{2}\widehat{K})$, which would imply $\wgtBb{z{\otimes}y}=3$.
\vspace{-1ex}
\end{rem}

Let $q \!\geq\!2$.
Then by sending $b$ to $b$, $b_{1}$ to $ab$, and all other $b_{i}$'s to $1$, $1\!<\!i\!<\!q$, we obtain a homomorphism $\phi_{q} : \pi^{q}_{1} \!\to\! \pi$, since $(ab)^{2}\!=\!b^{2}$ in $\pi$.
Then $\phi_{q}$ induces maps $B\phi_{q} : K_{q}\!=\!B\pi_{q} \!\to\! B\pi\!=\!K$ and $P^{m}_{\!\!B\,}\widehat{\phi_{q}} : P^{m}_{\!\!B\,}\widehat{K_{q}} \!\to\! P^{m}_{\!\!B\,}\widehat{K}$ such that $e^{E_{q}}_{m}{\circ}P^{m}_{\!\!B\,}\widehat{\phi_{q}} = (\phi_{q}{\times}\phi_{q}){\circ}e^{E}_{3}$.
Since $\phi_{q}^{*} : H^{2}(K) \!\to\! H^{2}(K_{q})$ is an isomorphism, $z_{q}\!:=\!\phi_{q}^{*}(z)$ is the generator of $H^{2}(K_{q}) \!\cong\! \field_{2}$.
Hence $(e^{E_{q}}_{3})^{*}(z_{q}{\otimes}z_{q}) \!=\! (e^{E_{q}}_{3})^{*}{\circ}(\phi_{q}{\times}\phi_{q})^{*}(z{\otimes}z) \!=\! (P^{3}_{\!\!B\,}\widehat{\phi_{q}})^{*}{\circ}(e^{E}_{3})^{*}(z{\otimes}z)$ $=$ $0$ by Theorem \ref{thm:main}, and we obtain $4 \leq \wgtBb{z_{q}{\otimes}z_{q}} \leq \wgtBb{E_{q}}$.
It implies the following.\vspace{-1ex}
\begin{thm}
$\tcm{K_{q}} = \tc{K_{q}} = \wgtBb{E_{q}} = \wgtBb{z_{q}{\otimes}z_{q}} = 4$\, for all $q \geq 2$.
\vskip-1ex
\end{thm}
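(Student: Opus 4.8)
The final theorem generalises the main result from $K=K_2$ to all non-orientable surfaces $K_q$ with $q\ge 2$. The strategy is to transport the computation already carried out for $K$ through the homomorphism $\phi_q : \pi_1^q \to \pi$ and the induced maps on fibrewise projective spaces, then sandwich the answer between a lower bound coming from fibrewise weight and an upper bound coming from dimension. Everything below assumes Theorem \ref{thm:main} and Theorem \ref{thm:weight-tc}.

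\textbf{Lower bound.}
First I would establish the chain of equalities and inequalities that force $\wgtBb{E_q}\ge 4$. The key geometric input is the homomorphism $\phi_q$ sending $b\mapsto b$, $b_1\mapsto ab$, and $b_i\mapsto 1$ for $1<i<q$; one checks this respects the surface relation because $(ab)^2=b^2$ in $\pi$. This $\phi_q$ induces $B\phi_q : K_q\to K$ and a fibrewise map $P^3_{\!\!B}\widehat{\phi_q}:P^3_{\!\!B}\widehat{K_q}\to P^3_{\!\!B}\widehat{K}$ compatible with the evaluation maps, i.e. $e^{E_q}_3\circ P^3_{\!\!B}\widehat{\phi_q}=(\phi_q\times\phi_q)\circ e^E_3$. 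Since $\phi_q^*:H^2(K)\to H^2(K_q)$ is an isomorphism, the class $z_q:=\phi_q^*(z)$ generates $H^2(K_q)\cong\field_2$. Then naturality gives
$$
(e^{E_q}_3)^*(z_q\otimes z_q)=(P^3_{\!\!B}\widehat{\phi_q})^*\circ(e^E_3)^*(z\otimes z)=0,
$$
the last equality by Theorem \ref{thm:main}. A nonzero element in the kernel of $(e^{E_q}_3)^*$ forces $\wgtBb{z_q\otimes z_q}\ge 4$, and since $z_q\otimes z_q$ is a nonzero cohomology class of $E_q$ we get $4\le\wgtBb{z_q\otimes z_q}\le\wgtBb{E_q}$.

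\textbf{Upper bound and conclusion.}
For the upper bound I would invoke Theorem \ref{thm:weight-tc}, giving $\wgtBb{E_q}\le\tc{K_q}\le\tcm{K_q}$, together with the dimension estimate $\tcm{K_q}\le 2\cat{K_q}\le 2\dim{K_q}=4$, exactly as in the proof of Theorem \ref{thm:main}. Combining the two directions collapses the whole chain
$$
4\le\wgtBb{z_q\otimes z_q}\le\wgtBb{E_q}\le\tc{K_q}\le\tcm{K_q}\le 4
$$
to equalities, yielding $\tcm{K_q}=\tc{K_q}=\wgtBb{E_q}=\wgtBb{z_q\otimes z_q}=4$ for all $q\ge 2$. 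The main obstacle is verifying that $\phi_q$ is a well-defined group homomorphism respecting the surface relation and that the induced map on the fibrewise projective spaces is compatible with evaluation; once that naturality square is in place, the rest is a formal squeeze. This is genuinely an obstacle only bookkeeping-wise, since the hard cohomological vanishing has already been done for $K$ in Theorem \ref{thm:main}, and the generalisation merely pulls it back along a cohomology-surjective map.
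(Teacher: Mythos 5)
Your proposal is correct and follows essentially the same route as the paper: the same homomorphism $\phi_{q}$ (well-defined since $(ab)^{2}=b^{2}$ in $\pi$), the same naturality square relating $e^{E_q}_{3}$ and $e^{E}_{3}$ to pull back the vanishing $(e^{E}_{3})^{*}(z{\otimes}z)=0$ from Theorem \ref{thm:main}, and the same squeeze against the upper bound $\tcm{K_q}\le 2\cat{K_q}\le 2\dim K_{q}=4$ from Theorem \ref{thm:weight-tc}. No substantive differences.
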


%
%
\bibliographystyle{alpha}
\bibliography{2017tc}

\begin{thebibliography}{Rud98}

\bibitem[CV]{CV}
Daniel~C. Cohen and Lucile Vandembroucq.
\newblock Topological complexity of the {K}lein bottle.
\newblock {\em arXiv:1612.03133}.

\bibitem[Dol55]{MR0073986}
Albrecht Dold.
\newblock \"uber fasernweise {H}omotopie\"aquivalenz von {F}aserr\"aumen.
\newblock {\em Math. Z.}, 62:111--136, 1955.

\bibitem[Far03]{MR1957228}
Michael Farber.
\newblock Topological complexity of motion planning.
\newblock {\em Discrete Comput. Geom.}, 29(2):211--221, 2003.

\bibitem[FH92]{MR1317569}
Edward Fadell and Sufian Husseini.
\newblock Category weight and {S}teenrod operations.
\newblock {\em Bol. Soc. Mat. Mexicana (2)}, 37(1-2):151--161, 1992.
\newblock Papers in honor of Jos\'e Adem (Spanish).

\bibitem[Hat02]{MR1867354}
Allen Hatcher.
\newblock {\em Algebraic topology}.
\newblock Cambridge University Press, Cambridge, 2002.

\bibitem[IS08]{MR2427411}
Norio Iwase and Michihiro Sakai.
\newblock Functors on the category of quasi-fibrations.
\newblock {\em Topology Appl.}, 155(13):1403--1409, 2008.

\bibitem[IS10]{MR2556074}
Norio Iwase and Michihiro Sakai.
\newblock Topological complexity is a fibrewise {L}-{S} category.
\newblock {\em Topology Appl.}, 157(1):10--21, 2010.

\bibitem[IS12]{MR2923451}
Norio Iwase and Michihiro Sakai.
\newblock Erratum to ``{T}opological complexity is a fibrewise {L}-{S}
  category'' [{T}opology {A}ppl. 157 (1) (2010) 10--21].
\newblock {\em Topology Appl.}, 159(10-11):2810--2813, 2012.

\bibitem[Jam95]{MR1361889}
I.~M. James.
\newblock Introduction to fibrewise homotopy theory.
\newblock In {\em Handbook of algebraic topology}, pages 169--194.
  North-Holland, Amsterdam, 1995.

\bibitem[JM91]{MR1130605}
I.~M. James and J.~R. Morris.
\newblock Fibrewise category.
\newblock {\em Proc. Roy. Soc. Edinburgh Sect. A}, 119(1-2):177--190, 1991.

\bibitem[Rud98]{MR1679849}
Yuli~B. Rudyak.
\newblock Category weight: new ideas concerning {L}usternik-{S}chnirelmann
  category.
\newblock In {\em Homotopy and geometry ({W}arsaw, 1997)}, volume~45 of {\em
  Banach Center Publ.}, pages 47--61. Polish Acad. Sci. Inst. Math., Warsaw,
  1998.

\bibitem[Str00]{MR1443893}
Jeffrey~A. Strom.
\newblock Two special cases of {G}anea's conjecture.
\newblock {\em Trans. Amer. Math. Soc.}, 352(2):679--688, 2000.

\end{thebibliography}

\medskip

\end{document}
\end